\numberwithin{equation}{section}
\numberwithin{equation}{section}
\newcommand{\be}{\begin{eqnarray}}
\newcommand{\ee}{\end{eqnarray}}
\newcommand{\ce}{\begin{eqnarray*}}
\newcommand{\de}{\end{eqnarray*}}
\newtheorem{theorem}{Theorem}[section]
\newtheorem{lemma}[theorem]{Lemma}
\newtheorem{proposition}[theorem]{Proposition}
\newtheorem{corollary}[theorem]{Corollary}
\theoremstyle{remark}
\newtheorem{example}[theorem]{Example}
\newtheorem{remark}[theorem]{Remark}
\newtheorem{definition}[theorem]{Definition}
\Crefname{eqn}{Equation}{Equations}
\Crefname{assumption}{Assumption}{Assumptions}
\Crefname{innercustomthm}{Condition}{Conditions}
\def\R{\mathbb{R}}
\def\N{\mathbb{N}}
\def\Z{\mathbb{Z}}
\def\T{\mathbb{T}}
\def\cA{\mathcal{A}}
\def\cB{\mathcal{B}}
\def\cE{\mathcal{E}}
\def\cF{\mathcal{F}}
\def\cH{\mathcal{H}}
\def\cP{\mathcal{P}}
\def\cS{\mathcal{S}}
\def\cZ{\mathcal{Z}}
\def\bd{\begin{definition}}
\def\ed{\end{definition}}
\def\bp{\begin{proposition}}
\def\ep{\end{proposition}}
\def\bc{\begin{corollary}}
\def\ec{\end{corollary}}
\def\bx{\begin{example}}
\def\ex{\end{example}}
\def\bG{\mathbf{G}}
\def\mC{{\mathbb C}}
\def\mE{{\mathbb E}}
\def\mI{{\mathbb I}}
\def\mN{{\mathbb N}}
\def\mP{{\mathbb P}}
\def\mQ{{\mathbb Q}}
\def\mR{{\mathbb R}}
\def\mT{{\mathbb T}}
\def\mZ{{\mathbb Z}}
\def\sF{{\mathscr F}}
\def\geq{\geqslant}
\def\leq{\leqslant}
\title{Numerical approximation of nonlinear fourth-order SPDEs with additive space-time white noise}
\author{Dirk Bl\"omker, Chengcheng Ling, Johannes Rimmele }
\date{\today}
\begin{document}

\maketitle

\begin{abstract}
   We consider the strong numerical approximation for a fourth-order stochastic nonlinear  SPDE driven by  space-time white noise on  {{$d=1,2,3$}}-dimensional torus.  
   We consider its full discretisation with 
   a spectral Galerkin scheme in space and Euler scheme in time. We show the convergence with almost spatial rate $2-\frac{d}{2}$ and  $\frac{6-d}{4}$-temporal rate obtained mainly via \it{Stochastic Sewing} technique.
     \\

   \noindent {\bf Keywords:} Stochastic PDEs, 
   Strong convergence, 
   Stochastic sewing. \\
{\bf MSC (2020):} 60H15, 60H10, 60H35.
\end{abstract}

\section{Introduction}

\noindent
As one of the  typical fourth-order stochastic nonlinear  stochastic partial differential equations  (SPDEs), the generalized Cahn-Hilliard equation originated in the seminal works of Cahn and Hilliard on phase separation in binary alloys, where they introduced a free energy functional accounting for both bulk and interfacial contributions \cite{C, CH}.
In its deterministic form, this fourth-order partial differential equation models how a concentration field evolves to form distinct regions (phases) over time. The addition of random fluctuations or thermal noise gives rise to the (generalized) stochastic Cahn-Hilliard equation, providing a more accurate portrayal of microscopic uncertainties present in real materials 
\cite{D, DPG, BYZ, RYZ}.\\

As one of very active fields for research  \cite{C, Gy}, there has been a long list of work on numerical approximation of various types of SPDEs. \cite{BJ,BJ1} consider stochastic Burgers and \cite{BG, BDG, GS, BGJK, MZ} study stochastic Allen-Cahn equations, moreover \cite{J, JKW, JP, LQ} provide broader framework. For further interest on this topic, we refer to the references within the aforementioned works.

Stochastic Cahn-Hilliard  equations extend classical Cahn-Hilliard models with noise, requiring careful numerical methods. Early simulations adapted an unconditional gradient stable scheme \cite{E}, focusing on stable mass conservation \cite{D}. Gradually, finite-element, spectral, and hybrid strategies address various geometries and stability needs
\cite{KP,PZ}, driving ongoing improvements in efficiency and consistency.\\

The focus of this paper is the numerical approximation of a  generalized
Cahn-Hilliard type nonlinear SPDE on a $2$-dimensional torus, defined as:
\begin{align}
        \label{eq:SPDE}
        \partial_tu=-\delta\Delta^2u-\bG(u)+\sigma\xi 
    \end{align}
    where  $\bG:\mR \to\mR$ is the nonlinear term satisfying   $\|\bG\|_\infty, \|\partial \bG\|_\infty<\infty$ and  $\xi$ is the space-time white noise representing random fluctuations, scaled by a constant diffusion coefficient $\sigma>0$, which governs the intensity of the stochastic effects. We derive it as $\xi:=\partial_t W$, where $W$ is a standard cylindrical Wiener process defined in a filtered probability space  $(\Omega,\mathcal{F}, (\mathcal{F}_t)_{t\geq0},\mP)$.  We further propose the property of { $u$} to be in a moving frame, i.e. $\int_{\mT^d}u(t,x)dx=0$ for any $t\geq0$  {which represents the elimination of the zero Fourier mode}. The scheme we consider is the full discretisation of the equation which was first addressed by \cite{Gy}, more precisely (see also \eqref{eq:sol-n-N}), a finite difference in space and an explicit Euler
method in time based on sampling rectangular increments of $W$ on a grid with
meshsize $n^{-1}$
in time and $N^{-1}$
in space. \\

Inspired by the work \cite{BDG, DKP, DLR}, in the end we obtain the $L^p$-rate of convergence of order almost $N^{\frac{d}{2}-2}+n^{-\frac{6-d}{4}}$. Evidently, we also overcome the order barrier $\frac{1}{4} $ with respect to the temporal step size which has been  addressed in \cite{JK, DKP}. The crucial idea within our analysis is to combine
the {\it regularity estimates on the semigroup generated by $-\Delta^2$} (see \Cref{lem:auxi-semi-U} and \Cref{lem:auxi-semigroup}) together with {\it Stochastic Sewing Lemma}  \cite{Le, DKP}  (see \Cref{lem:SSL}) 
so that we can  tune the spatial and temporal regularity of the mild solution and noise, which in the end yields the desired quantitative rate. This idea was originally invented to study the numerical approximation for singular SDEs \cite{BDG1, LL, GLL}. In this instance, we are able to apply it to generalized stochastic Cahn-Hilliard equations \eqref{eq:SPDE}. 
Let us also mention that, 
 in a future work \cite{DLR} we consider a surface growth model where nonlinear term has the form $\bG(u)=\nabla\cdot\frac{\nabla u}{1+|\nabla u|^2}$. 

\subsubsection*{Organization of the paper}
We introduce the definitions,  frequently used notations and main results in \Cref{sec:setting}. \Cref{sec:tools} presents  the  elementary  tools and estimates.  The proof of the main results is given in \Cref{sec:main-proof}.
\section{Preliminary and main results}\label{sec:setting}
In this section, we clarify the setting that we consider and the corresponding result that we mainly obtain. We start by introducing all of the notations.
\subsection{Notations}
 Let $d=1,2,3.$ Denote the torus $\mT^d:=\mR^d/\mZ^d$.  For $k\in\mZ^d$, denote the Fourier modes on $\mT^d$ by 
    \begin{align*}
   e_k(x):=\left\{\begin{array}{cc}
        {C_k} &  \text{if } \quad k=0,\\
       {C_k} e^{i\pi x\cdot k}  & \text{if }\quad |k|>0,
    \end{array}\right. 
     C_k:=\left\{\begin{array}{cc}
        \sqrt{2} &   \text{if } \quad k_1k_2=0,\\
      2  & \text{otherwise},
    \end{array}\right. 
        k\in\mZ^d, x\in \mT^d.
    \end{align*}
     Then $(e_k)_{k\in\mZ^d}$ is an orthonormal basis of $L^2(\mT^d,\mC)$. Meanwhile, observe
     \begin{align}
         \label{def:eigen}
         -\Delta e_k= 4 \pi^2 |k|^2 e_k=:\mu_k e_k. 
     \end{align}
     It says that $\mu_k:= 4 \pi^2 |k|^2$ are the eigenvalues of the Laplace operator $-\Delta$ on $\mT^d$.  Then we can derive that 
     \begin{align}
         \label{def:Bilaplace}
           -\Delta^2 e_k= - \mu_k^2e_k,\quad   (-\Delta)^\alpha e_k=\mu_k^{\alpha}e_k, \quad \alpha\in\mR, k\in \mZ^d\backslash\{0\}.
     \end{align}

    Given the probability space $(\Omega,\mathcal{F},\mP)$.  $\xi$ is {\it called space-time white noise} on $[0,T]\times\mT^d$ if for all $\phi,\psi\in L^2([0,T]\times\mT^d)$ hold that $\mE(\xi(\phi)\xi(\psi))=\langle \phi,\psi\rangle_{L^2([0,T]\times\mT^d)}$.    In other words,  let $(\beta_k)_{k \in \mZ^d\backslash\{0\}}$ be a sequence of i.i.d. real-valued Wiener processes. Let $W(t,x):=\sum_{k\in\mZ^d\backslash\{0\}}\beta_k(t)e_k(x),$ $x\in\mT^d$, $t \in [0,T]$.  Then its formal derivative $\xi_t:=\partial_tW$ is called {\it space-time white noise}.

 For $f\in L^1(\mT^d,\mC)$ denote the Fourier transform of $f$ as $\sF f(k):=\hat f(k):=\int_{\mT^2  } e^{-2\pi ik\cdot x}f(x)d x$, $k\in \mZ^d {\backslash\{0\}}$ and $\sF^{-1}$ corresponds to its inverse.     Define the semigroup 
    \begin{align}\label{def:semi-P}
        P_t u(x):=\sum_{k\in\mZ^d\backslash\{0\}}e^{-t\mu_k^2}u_ke_k(x),\quad\text{for}\ t\geq 0,\quad  u_k:=\langle u,e_k\rangle_{L^2(\mT^d)}.
    \end{align}
    One can equivalently write $P_tu=p_t\ast u$, where the kernel or Greensfunction is given by 
    \begin{align}
        \label{def:kernel}
        p_t(x):=\sF^{-1}(e^{-\frac{t}{2}|\pi\cdot|^4})(x)=\sum_{k\in\mZ^d\backslash\{0\}}e^{-t\mu_k^2}e_k(x).
    \end{align}
    For $N\in \mN$, let $\Pi_N$ be the orthogonal projection from $L^2(\mT^d,\mC)$ to its subspace $Span \left( (e_k), {|k|\leq N} \right)$. Define for $t\geq 0$
    \begin{align*}
       \Delta_N:=\Delta
       \Pi_N= \Pi_N\Delta
       ,
       \quad P_t^N:=P_t\Pi_N, \quad  P_t^N u(x):=\sum_{k\in\mZ^d\backslash\{0\},\atop |k|\leq N}e^{-t\mu_k^2}u_ke_k(x).
    \end{align*}  
Correspondingly, we have 
\begin{align}
    \label{def:ptN} P_t^N u=p_t^N\ast u,\qquad\qquad p_t^N (x):=\sum_{k\in\mZ^2\backslash\{0\} \atop |k|\leq N}e^{-t\mu_k^2}e_k(x).
\end{align}
Define the stochastic convolution as the following stochastic integral, either as a Fourier series expansion or in the sense of Walsh  
    \begin{align}
        \label{def:OU}
        U(t,x): & = \sum_{k\in\mZ^d\backslash\{0\}}\int_0^te^{- (t-s)\mu_k^2}d \beta_k(s)e_k(x)=\int_0^t\int_{\mT^d}p_{t-s}(x-y)\xi(ds,dy),\quad t\geq 0.
       \\ U^N(t,x):&=\sum_{k\in\mZ^d\backslash\{0\}, |k|\leq N}\int_0^te^{- (t-s) \mu_k^2}d \beta_k(s)e_k(x)=\int_0^t\int_{\mT^d}p^N_{t-s}(x-y)\xi(ds,dy).\label{def:OUN}
    \end{align}
    


Let $(\phi_j)_{j\geq-1}$ be the standard smooth dyadic partition of unity in $\mR^d$, i.e. 
a family of functions $\phi_j\in C^\infty(\mR^d)$ for $j \geq -1$, such that
$\phi_{-1}$ and $\phi_{0}$ are non-negative even functions such that the support of $\phi_{-1}$  is contained in
$B_{1/2}(0)$, the ball of radius $1/2$ around $0$, and the support of $\phi_0$ is contained in~$B_1(0)\backslash B_{1/4}(0)$; for $|j-i|>1$ we demand $\text{supp}\phi_j\cap\text{supp}\phi_i=\emptyset$; moreover, let $\phi_j(x)=\phi_0(2^{-j}x)$, for $x\in\mR^d$, $j\geq0$ and $\sum_{j\geq-1}^\infty \phi_j=1$ for every $x\in\mR^d$  (\cite{BCD}). Let $\cS'(\mT^d)$ denote the space of Schwartz distributions on $\mT^2$ and $\cS(\mT^d):= C^\infty(\mT^d)$. For $u\in\cS'(\mT^2)$, define 
    \begin{align}
        \label{def:block}\Delta_j:\cS'(\mT^d)\rightarrow C^\infty(\mT^d), \quad \Delta_ju:=\sF^{-1}(k\mapsto\phi_j(k)\sF(u)(k)).
    \end{align}
    Then the Besov space $\cB^\alpha_{p,q}$ on  $\mT^d$ for $p,q\in[1,\infty]$, $\alpha\in\mR$ is defined as 
    \begin{align}
        \label{def:Besov}\cB^\alpha_{p,q}:=\{f\in\cS'(\mT^d):\|f\|_{\cB^\alpha_{p,q}}:=\|(2^{j\alpha}\|\Delta_j f\|_{L^p(\mT^d)})_{j\geq -1}\|_{l^q}<\infty\}.
    \end{align}
   We let $C^\alpha:=\cB^\alpha_{\infty,\infty}$, for $\alpha\in\mR$. 
   We also use the notation
$C_x^\alpha \coloneqq C^\alpha $
to emphasize the spatial variable.
   Following from \cite[Chapter 2]{BCD},  we know that $C^\alpha$ is the known H\"older continuous function space, for $\alpha\in(0,1)$.  
   It is a well-known property of $\cB^\alpha_{p,q}$ that for any $g\in \cB^\alpha_{p,q}$,  we have 
   \begin{align}\label{prop:Besov}
       \|\partial^ng\|_{B_{p,q}^{\alpha-n}}\lesssim \|u\|_{B_{p,q}^\alpha}.
   \end{align}
   
   We always work on a finite time interval $[0,T]$. We write $a\lesssim b$ to indicate that there exists a positive constant $C$ such that $a \leq C b$.
For vectors $x,\, y\in \R^d$, we write $x\cdot y:=\sum_{i=1}^n x_i y_i$ and $|x|=\sqrt{x\cdot x}$.  We denote by $\lfloor z\rfloor$ the integer part of a real number $z$.
For any $N\in \mathbb{N}$ and $p\in [1,\infty]$, we denote by $L^p(\mR^d;\mR^N)$ (similarly on torus $L^p(\mT^d;\mR^N)$) the standard Lebesgue space; in the case where there is no risk of confusion on the dimension~$N$, we will simply write $L^p(\mR^d)$ (respectively $L^p(\mT^d)$) and denote by $\| \cdot\|_{L^p(\mR^d)}$  (respectively $\| \cdot\|_{L^p(\mT^d)}$) the corresponding norm. We denote by $C_T=C([0,T];\R^d)$ the space of continuous functions on $[0,T]$, endowed with the supremum norm $\| f\|_{C_T}=\sup_{t\in [0,T]} |f(t)|$. Given a Banach space $\cE$,
we denote by $C_T\cE_x$ the space of continuous functions $f:[0,T]\rightarrow\cE$ such that 
\begin{align*}
    \| f\|_{C_T\cE_x}:=\sup_{t\in [0,T]} \|f(t)\|_{\cE_x}<\infty.
\end{align*}
\subsection{Main results}
Here we introduce the solution and its corresponding approximation scheme that we consider.

\begin{definition}
    \label{def:mild-sol} 
    For $t\in[0,T]$, for  $k=0,1,\ldots, n$, $h=\frac{T}{n}$ and for $U$, $U^N$ from \eqref{def:OU}, \eqref{def:OUN}, let $v$, $u^N$ and~$u^{N,n}$ satisfy accordingly the following: 
 \begin{align}
        v(t)&=P_tu_0 + \int_0^tP_{t-s}\mathbf{G}( v(s)) d s + U(t) \qquad \qquad \qquad \qquad (\text{mild solution}) \label{def:sol-h}\\
        u^N(t)&=P_t^Nu_0+\int_0^tP_{t-s}^N\mathbf{G}( u^N(s)) d s +U^N(t) \qquad \qquad \qquad (\text{spectral Galerkin scheme})\label{def:sol-h-Galerkin}\\ 
      u^{N,n}(t_{k+1})&=P_h^Nu^{N,n}(t_{k})+P_{h}^N\mathbf{G}( u^{N,n}(t_k))  +U^N(t_{k+1})-P_h^{N}U(t_k)\quad (\text{exponential Euler scheme}). \label{def:sol-h-Ga-Eu}  
 \end{align}
 Observe that \eqref{def:sol-h-Ga-Eu} 
 is equivalent to
 \begin{align}
       u^{N,n}(t)&=P_t^Nu_0+\int_0^tP_{t-s}^N\mathbf{G}( u^{N,n}(k_n(s))) d s +U^N(t),\quad t\geq0, \label{eq:sol-n-N}
 \end{align}
  where we use $k_n(s):=\frac{\lfloor ns/T\rfloor}{n/T}$, $n\in\mN$ to round down to the nearest grid point.
\end{definition}

Our main result can be stated as follows:
\begin{theorem}
    \label{thm:main} Let $d=1,2,3$. Suppose $u_0\in C_x^{2-\frac{d}{2}}$ and $\|\bG\|_\infty, \|\partial^i \bG\|_\infty<\infty$, $i=1,\ldots,\lceil 3-\frac{d}{2}\rceil$. 
   For the solution $u^{N,n}$ to \eqref{def:sol-h-Ga-Eu}   
   and the solution $v$ to \eqref{def:sol-h}  we have for any sufficiently small $\varepsilon>0$
        \begin{align}
        \label{est:main-con-taming}
       \left( \mE\sup_{t\in[0,T]}\|u^{N,n}(t)-v(t)\|_{L^2(\mT^d)}^p\right)^\frac{1}{p}\leq C(N^{\frac{d}{2}-2+\varepsilon}+n^{-\frac{6-d}{4}+\varepsilon})
    \end{align}
     where $C$ depends on $d,T, p, \varepsilon$. 
\end{theorem}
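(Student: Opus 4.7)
The plan is to split the total error as
\begin{equation*}
u^{N,n}(t) - v(t) = \bigl(u^N(t) - v(t)\bigr) + \bigl(u^{N,n}(t) - u^N(t)\bigr),
\end{equation*}
handling the spectral Galerkin piece and the temporal discretisation piece separately in $L^p(\Omega;C_T L^2_x)$. For the Galerkin error I would subtract the mild formulations \eqref{def:sol-h} and \eqref{def:sol-h-Galerkin} and bound the three resulting contributions: the initial-data remainder $P_t(I-\Pi_N)u_0$, whose $L^2(\mT^2)$ norm is $O(N^{-1})$ by Parseval and $u_0\in C^1_x$; the noise remainder $U(t)-U^N(t)$, whose second moment is
\begin{equation*}
\sum_{|k|>N}\int_0^t e^{-2(t-s)\mu_k^2}\,ds \;\lesssim\; \sum_{|k|>N}\mu_k^{-2} \;\lesssim\; N^{-2},
\end{equation*}
with the $t$-supremum and higher moments recovered via Gaussian hypercontractivity and a Kolmogorov-type chaining argument; and a drift remainder controlled by $\|\partial\bG\|_\infty<\infty$, the $L^2$-contractivity of $P_{t-s}^N$, and Gronwall. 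Together these give the $N^{-1+\varepsilon}$ part of \eqref{est:main-con-taming}.

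For the temporal error I would further decompose
\begin{equation*}
u^{N,n}(t)-u^N(t) \;=\; \mathcal{I}^n(t) \;+\; \int_0^t P_{t-s}^N\bigl[\bG(u^N(k_n(s))) - \bG(u^{N,n}(k_n(s)))\bigr]\,ds,
\end{equation*}
where
\begin{equation*}
\mathcal{I}^n(t) := \int_0^t P_{t-s}^N\bigl[\bG(u^N(s))-\bG(u^N(k_n(s)))\bigr]\,ds.
\end{equation*}
The second summand is Lipschitz in the unknown $u^{N,n}-u^N$ and is absorbed by a Gronwall step, so the task reduces to proving $(\mE\sup_t\|\mathcal{I}^n(t)\|_{L^2}^p)^{1/p}\lesssim n^{-1+\varepsilon}$. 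A naive Lipschitz estimate using only the temporal H\"older regularity of $U^N$ yields at best the classical rate $n^{-1/2}$; to beat the $1/4$ barrier mentioned in the introduction I would apply the stochastic sewing lemma \Cref{lem:SSL} to the time-indexed family of germs $A^t_{s,r}:= \int_s^r P_{t-\tau}^N\bigl[\bG(u^N(\tau)) - \bG(u^N(s))\bigr]\,d\tau$, whose Riemann sums over the uniform mesh $\{t_i\}$ reconstruct $\mathcal{I}^n(t)$ up to a controlled endpoint remainder.

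The main obstacle is verifying the two SSL hypotheses for $A^t_{s,r}$: an unconditional bound of order $(r-s)^{\alpha_1}$ with $\alpha_1>1/2$, and a conditional bound $\|\E[\delta A^t_{s,u,r}\mid\mathcal{F}_s]\|_{L^p(\Omega)}\lesssim (r-s)^{\alpha_2}$ with $\alpha_2>1$. The first I would obtain from the temporal H\"older regularity of $u^N$ inherited from the bilaplacian stochastic convolution via \Cref{lem:auxi-semi-U}. For the second, the Taylor expansion
\begin{equation*}
\bG(u^N(\tau))-\bG(u^N(s)) = \partial\bG(u^N(s))\bigl[u^N(\tau)-u^N(s)\bigr] + R(s,\tau)
\end{equation*}
kills the leading martingale piece of $U^N(\tau)-U^N(s)$ under the $\mathcal{F}_s$-conditional mean, leaving only drift and higher-order stochastic contributions of order $\tau-s$; combining this with the bilaplacian smoothing $\|P_t^N\|_{L^2\to H^\beta}\lesssim t^{-\beta/4}$ from \Cref{lem:auxi-semigroup} supplies the missing spatial regularity and pushes $\alpha_2$ past $1$. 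The delicate point is to balance the spatial regularity used on $\partial\bG[\,\cdot\,]$ against the integrability loss in the $t^{-\beta/4}$ singularity at $\tau\to t$, and this is exactly where the fourth-order nature of the operator buys extra room over a heat-semigroup setting. Once both SSL hypotheses are secured, the sewing bound on $\mathcal{I}^n$, the Galerkin estimate, and a final Gronwall on the remaining Lipschitz term deliver \eqref{est:main-con-taming}.
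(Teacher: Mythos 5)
Your Galerkin half is essentially the paper's estimate of $I_4$ (plus the $U-U^N$ bound): initial data, noise tail and drift remainder, with the only caveat that the term $\int_0^t(P_{t-s}^N-P_{t-s})\bG(u^N(s))\,ds$ needs the uniform spatial regularity of $u^N$ from \Cref{lem:SPDE} (cf.\ \eqref{est:I42}), not just $L^2$-contractivity of the semigroup; that part is fine. The genuine gap is in the temporal half, and it is twofold. First, your sewing setup cannot produce the factor $n^{-1+\varepsilon}$: your germ $A^t_{s,r}=\int_s^r P^N_{t-\tau}[\bG(u^N(\tau))-\bG(u^N(s))]\,d\tau$ contains no reference to the mesh, its sewn limit is $\cA\equiv 0$, and the SSL conclusion \eqref{est:SSL-3} then only bounds $\|A^t_{s,r}\|$ itself; if instead you try to bound $\mathcal{I}^n(t)=\sum_i A^t_{t_i,t_{i+1}}$ by summing per-interval germ estimates of size $n^{-(1/2+\varepsilon_1)}$ (or even $n^{-5/4}$), you get at best $n^{-1/4}$, nowhere near rate $1$. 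The paper avoids this by building the discretisation into the germ, $A_{s,t}:=\mE_s\int_s^t\Delta_jP^{N}_{t^\ast-r}\bigl(\bG(\tilde U_r)-\bG(\tilde U_{k_n(r)})\bigr)dr$ as in \eqref{def:A}, so that the sewn process is exactly the discretisation-error integral and the rate $n^{-1+\varepsilon}$ appears in the constant $C_1$ of \eqref{con:SSL-1} (the second hypothesis is then trivial because $\mE_s\delta A_{s,u,t}=0$).

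Second, the mechanism you propose for the conditional bound does not work, and you are missing the ingredient that makes it work in the paper. Writing $\bG(u^N(\tau))-\bG(u^N(s))=\partial\bG(u^N(s))[u^N(\tau)-u^N(s)]+R$ and taking $\mE_s$ does not leave terms of order $\tau-s$: the conditional mean of the increment contains $(P^N_{\tau-s}-\mathrm{Id})U^N(s)$ and $(P^N_{\tau}-P^N_{s})u_0$, which in any norm of positive spatial regularity are only $O((\tau-s)^{1/4-})$ (by \Cref{lem:auxi-semi-U} the process has time regularity $\lambda/4$ only at spatial regularity $1-\lambda-\varepsilon$), and the Taylor remainder $R$ is quadratic in increments of size $(\tau-s)^{1/4-}$, hence only $O((\tau-s)^{1/2-})$. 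The exponent beyond $1$ is obtained in the paper by measuring increments in the negative-regularity space $C_x^{-1+\varepsilon}$ (where they are of order $(\tau-s)^{1-}$, see \eqref{est:tiledU-P}) and recovering the lost spatial regularity through the exact Gaussian conditional identity $\mE_s\bG(\tilde U_r)=(\cP_{Q(r-s)}\bG)(P_{r-s}\tilde U_s)$ together with the semigroup smoothing of \Cref{lem:auxi-semigroup} and the admissible singularity $(t^\ast-t)^{-\delta_1}$. This identity is only available because the nonlinear drift has first been removed by the Girsanov reduction (Step 1 of the proof of \Cref{lem:I2-est}), replacing $\hat u^N$ by the conditionally Gaussian process $\tilde U=U+P^N u_0$; since you apply sewing directly to $u^N$, which is not conditionally Gaussian, no such computation is available in your scheme, and without it neither SSL hypothesis can be verified with the strength needed for rate $n^{-1+\varepsilon}$.
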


    \section{Tools and Auxiliary estimates}\label{sec:tools}
     Denote $[S, T]_\leq := \{(s, t) | S \leq s < t \leq T\}$ and $[S, T]_\leq^* := \{(s, t) | S \leq s < t \leq T, t-s\leq T-t\}$. For a function $A$ of one variable and $s \leq t$, we write $A_{s,t} \coloneqq A_t - A_s$ and for functions $A$ of
two variables and $s \leq u \leq t$, we denote $\delta A_{s,u,t} := A_{s,t} - A_{s,u} - A_{u,t}$. 
Furthermore, denote by $\mE_s$ the conditional expectation with respect to $\cF_s$.
    \begin{lemma}[Stochastic Sewing Lemma, \protect{\cite[Lemma 3.2]{DKP}}]\label{lem:SSL}
    Fix $p \geq 2$ and $0 \leq S < T \leq 1$. Let $ A : [S, T]_\leq \rightarrow L^p
(\Omega)$ be such that $A_{s,t}$ is $\cF_t$-measurable for all $(s, t) \in [S, T]_\leq$. Suppose that there exist $\varepsilon_1, \varepsilon_2 > 0$, $\delta_1, \delta_2 \geq 0$ and $C_1, C_2 < \infty$
satisfying $1/2 + \varepsilon_1 - \delta_1 > 0$, $1 + \varepsilon_2 -\delta_2 > 0$ and such that for all $(s, t) \in [S, T]_\leq^*$, $u \in [s, t]$ the following
bounds hold:
\begin{align}
    \label{con:SSL-1}\|A_{s,t}\|_{L^p(\Omega)}\leq C_1|T-t|^{-\delta_1}|t-s|^{\frac{1}{2}+\varepsilon_1},
    \\\label{con:SSL-2} \|\mE_s \delta A_{s,u,t}\|_{L^p(\Omega)}\leq C_2|T-t|^{ -\delta_2}|t-s|^{1+\varepsilon_2},
\end{align}
then there exists a unique $(\cF_t)_{t\in[S,T]}$-adapted process $\cA : [S, T] \rightarrow L^p
(\Omega)$ such that
$\cA_S = 0$ and that there exist $K_1, K_2$ such that for all $(s, t) \in [S, T]_\leq^*$ one has
\begin{align}
    \label{est:SSL-1}
    \|\cA_{s,t}-A_{s,t}\|_{L^p(\Omega)} & \leq K_1|T-t|^{ - \delta_1}|t-s|^{\frac{1}{2}+\varepsilon_1}+ K_2|T-t|^{ - \delta_2}|t-s|^{1+\varepsilon_2},
    \\ \label{est:SSL-2} \|\mE_s(\cA_{s,t}-A_{s,t})\|_{L^p(\Omega)} & \leq K_2|T-t|^{-\delta_2}|t-s|^{1+\varepsilon_2}.
\end{align}
Furthermore, there exists a constant $C $ depending only on $ p, \varepsilon_1, \varepsilon_2$, such that the above bounds
hold with $K_1 = CC_1, K_2 = CC_2$. Finally, there exists a constant $\tilde C$ depending only on
$p, \varepsilon_1, \varepsilon_2, \delta_1, \delta_2$, such that for all $(s, t) \in [S, T]_\leq $ one has
\begin{align}
    \label{est:SSL-3}\|\cA_{s,t}\|_{L^p(\Omega)} &
    \leq \tilde C
    \left(C_1|t-s|^{\frac{1}{2}+\varepsilon_1-\delta_1}+C_2|t-s|^{\frac{1}{2}+\varepsilon_2-\delta_2}
    \right).
\end{align}
    \end{lemma}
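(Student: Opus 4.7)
\medskip
\noindent\textbf{Proof proposal.} The plan is to split the full error through the spectral Galerkin solution $u^N$ from \eqref{def:sol-h-Galerkin},
\begin{align*}
    u^{N,n}(t)-v(t)=\bigl(u^N(t)-v(t)\bigr)+\bigl(u^{N,n}(t)-u^N(t)\bigr),
\end{align*}
and then to estimate the two summands with genuinely different tools: the spatial error $u^N-v$ by Fourier cut-off/semigroup estimates, and the temporal error $u^{N,n}-u^N$ by \Cref{lem:SSL}.

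For the spatial term, the mild formulations yield
\begin{align*}
    u^N(t)-v(t)=(P_t^N-P_t)u_0+\int_0^tP_{t-s}^N\bigl(\bG(u^N(s))-\bG(v(s))\bigr)ds+\int_0^t(P_{t-s}^N-P_{t-s})\bG(v(s))ds+U^N(t)-U(t).
\end{align*}
The projection-type contributions on $u_0$ and on the bounded drift are controlled by $\|(\mathrm{Id}-\Pi_N)f\|_{L^2}\lesssim N^{-\alpha}\|f\|_{C^\alpha}$ (using $u_0\in C_x^1$ and $\|\bG\|_\infty<\infty$), combined with the smoothing estimates of \Cref{lem:auxi-semigroup} that let one trade $\alpha$ spatial derivatives against an integrable singularity $|t-s|^{-\alpha/4}$. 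The stochastic-convolution error $U^N-U$ is handled by a direct modewise variance computation, giving the almost-$N^{-1}$ rate in $L^p(\Omega;L^2_x)$. The Lipschitz contribution of $\bG$ closes the estimate through Grönwall, and the $\sup_{t\in[0,T]}$ is absorbed via a Kolmogorov-type continuity argument once sufficiently many moments are available.

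For the temporal term, $e(t):=u^{N,n}(t)-u^N(t)$ satisfies
\begin{align*}
    e(t)=\int_0^tP_{t-s}^N\bigl(\bG(u^{N,n}(k_n(s)))-\bG(u^N(k_n(s)))\bigr)ds+\int_0^tP_{t-s}^N\bigl(\bG(u^N(k_n(s)))-\bG(u^N(s))\bigr)ds,
\end{align*}
where the first integral is Lipschitz in $e(k_n(\cdot))$ and is absorbed by Grönwall, reducing the task to estimating $I(t):=\int_0^tP_{t-s}^N[\bG(u^N(k_n(s)))-\bG(u^N(s))]ds$. To obtain the full order $n^{-1+\varepsilon}$ on $I$, I would apply \Cref{lem:SSL} to a germ of the form
\begin{align*}
    A_{s,t}:=\int_s^tP_{t-r}^N\,\E_s\bigl[\bG(u^N(k_n(r)))-\bG(u^N(r))\bigr]dr,
\end{align*}
tuned so that the sewn process satisfies $\cA_{0,t}=I(t)$. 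The pointwise bound \eqref{con:SSL-1} uses $\|\bG\|_\infty$ together with the time-Hölder regularity of $u^N$ inherited from the stochastic convolution $U^N$, while the conditional bound \eqref{con:SSL-2} is the heart of the argument: conditionally on $\cF_s$, the increment $u^N(r)-u^N(k_n(r))$ carries a non-degenerate Gaussian component from the bi-Laplacian stochastic convolution, so $\E_s[\bG(u^N(r))-\bG(u^N(k_n(r)))]$ behaves like a heat-type regularisation of $\bG$ and produces the extra power that breaks the $1/4$ temporal barrier.

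Verifying \eqref{con:SSL-2} with the right exponents is the main obstacle, as one must identify the correct Gaussian cancellation while simultaneously matching the singularity of the $P_{t-r}^N$ kernel to achieve $1/2+\varepsilon_1-\delta_1>0$ and $1+\varepsilon_2-\delta_2>0$ in \Cref{lem:SSL}; this is exactly the tuning between spatial and temporal regularities referred to in the introduction. Once established, \eqref{est:SSL-3} gives the almost order-one temporal rate on $I$, and a final Grönwall on $\E\sup_{t\leq T}\|e(t)\|_{L^2}^p$ combined with the spatial estimate yields \eqref{est:main-con-taming}.
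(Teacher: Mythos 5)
Your proposal does not address the statement at hand. The statement to be proved is the stochastic sewing lemma itself (\Cref{lem:SSL}): given a two-parameter germ $A$ satisfying \eqref{con:SSL-1}--\eqref{con:SSL-2} with the singular weights $|T-t|^{-\delta_1}$, $|T-t|^{-\delta_2}$, one must \emph{construct} the adapted process $\cA$, prove its uniqueness, and establish the bounds \eqref{est:SSL-1}--\eqref{est:SSL-3} with the stated dependence of the constants. What you wrote instead is a sketch of the proof of the main convergence theorem (\Cref{thm:main}), in which the sewing lemma is \emph{used} as a black box to estimate the temporal discretisation error. Nowhere do you construct $\cA$ (e.g.\ as the limit of Riemann-type sums $\sum_i A_{t_i,t_{i+1}}$ over dyadic refinements of $[s,t]$), nowhere do you verify convergence of these sums by splitting $\delta A_{s,u,t}$ into a conditional-expectation part controlled by \eqref{con:SSL-2} and a martingale-difference part controlled via the BDG/Doob inequalities and \eqref{con:SSL-1}, and nowhere do you treat the blow-up weights near $T$ (which is precisely why the hypotheses are only imposed on $[S,T]_\leq^*$ and why the conditions $1/2+\varepsilon_1-\delta_1>0$, $1+\varepsilon_2-\delta_2>0$ are needed), nor do you prove uniqueness of $\cA$. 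In short, the attempt proves a different result and leaves the actual statement untouched; note that the paper itself does not reprove this lemma but cites it from the reference [DKP, Lemma 3.2], where the dyadic-approximation argument just described is carried out.

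As a secondary remark, even read as a sketch of \Cref{thm:main} your outline diverges from the paper's route: the paper splits the error into four terms through the auxiliary processes $\hat u^{N}$ and $\hat u^{N,n}$ of \eqref{def:sol-h-G}--\eqref{eq:sol-n-N-hat} (see \eqref{est:uNn-v}), and before sewing it performs a Girsanov reduction replacing $\hat u^N$ by the explicit Gaussian process $U_t+P_t^N u_0$, which is what makes the conditional bound \eqref{con:SSL-2} computable via the heat-semigroup regularisation $\cP_{Q(r-s)}\bG$; your germ is built directly from $u^N$ and the claimed Gaussian cancellation is asserted rather than made rigorous. But this is beside the main point: the proposal is not a proof of \Cref{lem:SSL}.
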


      We state a version of Gr\"onwall lemma that we use quite often in the later analysis.
      
    \begin{lemma}[Gr\"onwall Lemma. \protect{\cite[Proposition 3.4]{DKP}}]
    \label{lem:Gron}
        Let $V$ be a Banach space.  Let  $X, Y, Z\in  L_p(\Omega; C([0, T ]; V ))$, where $p\geq1$. Assume that there exists a Lipschitz continuous function $F$
on $V$ with Lipschitz constant $L_1$, a family $(\cS(s, t))_{0\leq s\leq t\leq T}$ of uniformly bounded linear operators
on $V$ with uniform bound $L_2$ and such that $(s, t) \mapsto \cS(s, t)v$ is measurable for any $v \in V$, a
measurable mapping $\tau  : [0, T ] \rightarrow [0, T ]$ such that $\tau (s) \leq s$ and that the following equality holds
for all $0 \leq t \leq T$:
\begin{align}
    \label{con:Gron}X_t-Y_t=Z_t+\int_0^t\cS(s,t)(F(X_{\tau(s)})-F(Y_{\tau(s)}))ds.
\end{align}
Then there exists a constant $C = C(p, L_1, L_2, T )$ such that
\begin{align}
    \label{est:Gron}
    \mE \sup_{t\in[0,T]}\|X_t-Y_t\|^p\leq C \mE \sup_{t\in[0,T]}\|Z_t\|^p.
\end{align}
    \end{lemma}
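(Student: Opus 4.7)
The plan is to reduce the claimed $L^p$ bound to a pathwise scalar Gr\"onwall inequality. First I would set $D_t := X_t - Y_t$ and insert the uniform bound $\|\cS(s,t)v\| \leq L_2\|v\|$ together with the Lipschitz property of $F$ into \eqref{con:Gron} to obtain the pathwise estimate
\begin{align*}
    \|D_t\| \leq \|Z_t\| + L_1 L_2 \int_0^t \|D_{\tau(s)}\| \, ds, \quad t\in[0,T].
\end{align*}
Because $\tau(s)\leq s$, the delayed term is dominated by the running supremum $M_s := \sup_{r\in[0,s]}\|D_r\|$, which is continuous, nondecreasing, and almost surely finite by the hypothesis $X,Y\in L^p(\Omega;C([0,T];V))$.

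Next, I would take the supremum over $t\in[0,T']$ for an arbitrary $T'\in[0,T]$. Monotonicity of the integral in $t$ together with the monotonicity of $M$ in $s$ yields the scalar inequality
\begin{align*}
    M_{T'} \leq \sup_{t\in[0,T]}\|Z_t\| + L_1 L_2 \int_0^{T'} M_s \, ds,
\end{align*}
valid almost surely. Applying the classical deterministic Gr\"onwall inequality pathwise at $T'=T$ then gives
\begin{align*}
    M_T \leq e^{L_1 L_2 T}\sup_{t\in[0,T]}\|Z_t\|\quad\text{a.s.}
\end{align*}
Raising both sides to the $p$-th power and taking expectation produces \eqref{est:Gron} with $C = e^{p L_1 L_2 T}$, which has the announced dependence on $p,L_1,L_2,T$.

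The argument is entirely deterministic on each sample path, so no stochastic machinery is required. The only point that warrants verification is that the integrand $s\mapsto\cS(s,t)(F(X_{\tau(s)})-F(Y_{\tau(s)}))$ is Bochner integrable for almost every $\omega$, which is inherited from the joint measurability of $(s,t)\mapsto\cS(s,t)v$ assumed in the statement, the measurability of $\tau$, the Lipschitz (hence continuous) nature of $F$, and the path continuity of $X,Y$. I do not expect any real obstacle here: the delay $\tau$ and the operator family $\cS$ drop out of the estimate as soon as the bounds $\tau(s)\leq s$ and $\|\cS(s,t)\|\leq L_2$ are invoked, reducing the claim to a textbook linear Gr\"onwall statement. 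If anything, the mildly delicate step is the commutation of supremum and integral, but since $M$ is nondecreasing this is trivial.
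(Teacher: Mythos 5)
Your argument is correct: the pathwise reduction to a scalar Gr\"onwall inequality for the running supremum $M_s$, followed by raising to the $p$-th power and taking expectation, is exactly the standard proof of this statement. The paper itself gives no proof (it quotes the result from \cite[Proposition 3.4]{DKP}), and your route coincides with the argument used there, so there is nothing to add.
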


    \begin{lemma}
        \label{lem:auxi-semigroup}
        Let  $(P_t)_{t\geq0}$ be the semigroup defined in \eqref{def:semi-P} and $\mI_d$ denote the identity matrix in $\mR^d.$ Then for any $\alpha,\beta\in \mR$ so that $\alpha>\beta$ and for any $t>0$, $\theta\in(0,4)$ we have
        \begin{align}
            \label{est:semi-Hol}
            \|P_tf\|_{C^{\alpha}}\lesssim t^{-\frac{\alpha-\beta}{4}}\|f\|_{C^\beta},\quad \|(\mI_d-P_t)f\|_{C^{\beta}}\lesssim t^{\frac{\theta}{4}}\|f\|_{C^{\beta+\theta}}.
        \end{align}
    \end{lemma}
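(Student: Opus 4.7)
The natural route is Littlewood--Paley: since $P_t$ is a Fourier multiplier on $\mT^2$, it commutes with each block $\Delta_j$ defined in \eqref{def:block}, and the characterization $\|u\|_{C^\gamma}\simeq \sup_{j\geq -1}2^{j\gamma}\|\Delta_j u\|_{L^\infty}$ reduces the problem to pointwise bounds on $\|P_t\Delta_j f\|_{L^\infty}$ and $\|(\mI_2-P_t)\Delta_j f\|_{L^\infty}$. On $\operatorname{supp}\phi_j$ one has $\mu_k\simeq 2^{2j}$, hence $\mu_k^2\simeq 2^{4j}$; the plan is to establish
\[
\|P_t\Delta_j f\|_{L^\infty}\lesssim e^{-c\,t\,2^{4j}}\|\Delta_j f\|_{L^\infty},\qquad \|(\mI_2-P_t)\Delta_j f\|_{L^\infty}\lesssim t^{\theta/4}\,2^{j\theta}\|\Delta_j f\|_{L^\infty},
\]
for some $c>0$ and every $\theta\in(0,4)$, and then weight by $2^{j\alpha}$ (resp.\ $2^{j\beta}$) and take the supremum.

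\textbf{Step 1 (block multiplier bounds).} I would write $P_t\Delta_j f = K_t^j * \Delta_j f$ with
\[
K_t^j(x):=\sum_{k\in\mZ^2\setminus\{0\}}e^{-t\mu_k^2}\tilde\phi_j(k)\,e_k(x),
\]
where $\tilde\phi_j$ is a smooth bump equal to $1$ on $\operatorname{supp}\phi_j$ and supported in a slightly enlarged annulus. By Poisson summation $\|K_t^j\|_{L^1(\mT^2)}\leq \|\sF^{-1}(e^{-ct|\cdot|^4}\tilde\phi_j)\|_{L^1(\mR^2)}$, and the substitution $\xi=2^j\eta$ reduces this to $\|\sF^{-1}(e^{-s|\eta|^4}\tilde\phi_0)\|_{L^1(\mR^2)}$ with $s=ct\,2^{4j}$. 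Since $|\eta|$ is bounded below on $\operatorname{supp}\tilde\phi_0$, this last $L^1$ norm is $\lesssim e^{-c's}$; Young's inequality then gives the first block bound. The same argument with $1-e^{-t\mu_k^2}$ in place of $e^{-t\mu_k^2}$, combined with the elementary inequality $|1-e^{-x}|\leq \min(1,x)\leq x^{\theta/4}$ valid for $x\geq 0$ and $\theta/4\in[0,1]$, yields the second block bound.

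\textbf{Step 2 (assembly).} Multiplying the first block bound by $2^{j\alpha}$ and using $\|\Delta_j f\|_{L^\infty}\leq 2^{-j\beta}\|f\|_{C^\beta}$,
\[
2^{j\alpha}\|\Delta_j P_t f\|_{L^\infty}\lesssim 2^{j(\alpha-\beta)}e^{-c\,t\,2^{4j}}\|f\|_{C^\beta}\lesssim t^{-(\alpha-\beta)/4}\|f\|_{C^\beta},
\]
where the last step is the elementary $\sup_{y>0}y^{(\alpha-\beta)/4}e^{-cy}<\infty$ applied with $y=t\,2^{4j}$; the hypothesis $\alpha>\beta$ is used precisely here. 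For the second estimate, multiplying by $2^{j\beta}$ gives
\[
2^{j\beta}\|\Delta_j(\mI_2-P_t)f\|_{L^\infty}\lesssim t^{\theta/4}\,2^{j(\beta+\theta)}\|\Delta_j f\|_{L^\infty}\leq t^{\theta/4}\|f\|_{C^{\beta+\theta}}.
\]
Taking the supremum in $j$ in both displays produces \eqref{est:semi-Hol}.

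\textbf{Where the work sits.} The only non-routine ingredient is the $L^1$-kernel bound $\|K_t^j\|_{L^1}\lesssim e^{-ct\,2^{4j}}$ together with its $(t\,2^{4j})^{\theta/4}$ counterpart for $\mI_2-P_t$; these follow from Poisson summation plus the Schwartz character of $\sF^{-1}(e^{-s|\eta|^4}\tilde\phi_0)$ on $\mR^2$, but they are the point where the fourth-order scaling of $-\Delta^2$ enters and produces the exponent $1/4$ in \eqref{est:semi-Hol}. The low-frequency block $j=-1$ is handled trivially because the zero Fourier mode is absent from \eqref{def:semi-P}.
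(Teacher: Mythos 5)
Your Littlewood--Paley argument (commuting $P_t$ with the blocks $\Delta_j$, rescaling the block kernel to get $\|K_t^j\|_{L^1}\lesssim e^{-ct2^{4j}}$ and its $(t2^{4j})^{\theta/4}$ analogue, then weighting by $2^{j\alpha}$ resp.\ $2^{j\beta}$) is precisely the proof the paper invokes by citing \cite[Lemmas A.6, A.7]{GIP}, with the fourth-order scaling $2^{4j}$ in place of $2^{2j}$, so it is correct and takes essentially the same route. One small caveat: since \eqref{def:semi-P} omits the zero mode, the block $j=-1$ is indeed trivial for the smoothing bound ($P_t\Delta_{-1}f=0$), but for the second bound one has $(\mI_2-P_t)\Delta_{-1}f=\Delta_{-1}f$, so that estimate (as stated in the paper as well) is only valid for mean-zero $f$, which is the paper's standing assumption.
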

    \begin{proof}
     \cite[Lemma A.5, Lemma A.7]{GIP}  show the case where the semigroup is generated by the negative Laplacian $-\Delta$. For $-\Delta^2$ we obtain \eqref{est:semi-Hol} via the same proof except changing the rate from $t^{-\frac{\alpha-\beta}{2}}$ of  \cite[Lemma A.5, Lemma A.7]{GIP} to $t^{-\frac{\alpha-\beta}{4}}$  as in \eqref{est:semi-Hol}.  Therefore, the proof is identical to that of \cite[Lemma A.5, Lemma A.7]{GIP} and is omitted.
    \end{proof}
    For completeness, we state a standard regularity result for the Ornstein--Uhlenbeck process $U$. Since the precise values of the various exponents are crucial, we include the proof.
    \begin{lemma}
        \label{lem:auxi-semi-U}
        Define the processes $(U_t)_{t\geq0}$ and $(P_t)_{t\geq0}$, as in \eqref{def:OU} and \eqref{def:semi-P}. For $p\in[1,\infty)$, {$\lambda\in(0,4- d)$,}
        $\mu\in(0,d)$, and $\varepsilon\in(0,1)$ sufficiently small, there exist constants $C=C(\varepsilon,\lambda,T,p,d)$ and $\tilde C=C(\varepsilon,T,p,d,\mu)$ such that for any $0\leq s<t\leq T$, for $d=1,2,3$, the following holds:
        \begin{align}
            \label{est:Ut-Us}
            \mE \left\|U_t-U_s \right\|^p_{C_x^{2-\frac{d}{2}-\lambda-\varepsilon}}&\leq C|t-s|^{\frac{\lambda p}{4}},
            \\
           \mE\|U\|_{C_T^{\lambda/4}C_x^{2-\frac{d}{2}-\lambda-\varepsilon}}&\leq C, \label{est:U-space}
           \\
      \left( \mE \sup_{r\in[0,T]}\left\|(P_{t+s}-P_t)U_r \right\|^p_{C_x^{-(2-\frac{d}{2})+\varepsilon}}\right)^\frac{1}{p}&\leq \tilde C t^{-\frac{\mu}{4}-\frac{\varepsilon}{2}}s^{\frac{4-d+\mu}{4}}.
      \label{est:U-P}
        \end{align}
    \end{lemma}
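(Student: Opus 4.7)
My plan is to handle the three bounds by Gaussian computations in the Fourier basis $(e_k)$, combined with the deterministic semigroup estimates of \Cref{lem:auxi-semigroup}.

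For \eqref{est:Ut-Us} I would expand $U_t-U_s$ in Fourier modes and pass to Littlewood--Paley blocks. For each $j$ the block $\Delta_j(U_t-U_s)(x)$ is Gaussian, and the elementary inequality $|e^{-a}-e^{-b}|\leq e^{-\min(a,b)}|a-b|^\theta$ valid for $\theta\in[0,1]$, applied with $\theta=\lambda/4$ together with $\int e^{-2r\mu_k^2}dr\lesssim \mu_k^{-2}$, gives
\begin{equation*}
\mathbb{E}|\Delta_j(U_t-U_s)(x)|^2 \lesssim (t-s)^{\lambda/2}\, 2^{2j(\lambda-1)}.
\end{equation*}
Gaussian hypercontractivity lifts this to the $p$-th moment, and the Besov embedding $\mathcal{B}^\alpha_{p,p}\hookrightarrow C^{\alpha-2/p}$ with $\alpha<1-\lambda$ and $p$ large enough then yields \eqref{est:Ut-Us}. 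The second estimate \eqref{est:U-space} follows from \eqref{est:Ut-Us} by Kolmogorov's continuity criterion in time, applied with parameters $(\lambda',\varepsilon')$ slightly perturbed from $(\lambda,\varepsilon)$ so that $\lambda'/4-1/p \geq \lambda/4$ and $C^{1-\lambda'-\varepsilon'}\hookrightarrow C^{1-\lambda-\varepsilon}$; the identity $U_0=0$ then absorbs the sup part of the Hölder norm in time.

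The key observation for \eqref{est:U-P} is that $P_{t+s}-P_t$ is a deterministic linear operator, so
\begin{equation*}
\sup_{r\in[0,T]}\|(P_{t+s}-P_t)U_r\|_{C^{-1+\varepsilon}} \leq \|P_{t+s}-P_t\|_{C^{1-\varepsilon}\to C^{-1+\varepsilon}}\cdot\sup_{r\in[0,T]}\|U_r\|_{C^{1-\varepsilon}}.
\end{equation*}
The random sup on the right is bounded in $L^p(\Omega)$ by \eqref{est:U-space} applied with $(\lambda,\varepsilon)\rightsquigarrow(\varepsilon/2,\varepsilon/2)$. For the deterministic operator norm, the factorisation $(P_{t+s}-P_t)=(P_s-\mathrm{Id})P_t$ and a double application of \Cref{lem:auxi-semigroup} yield, for any $\theta\in(0,4)$ and $a>0$ with $1-\varepsilon+a=-1+\varepsilon+\theta$,
\begin{equation*}
\|(P_{t+s}-P_t)f\|_{C^{-1+\varepsilon}} \lesssim s^{\theta/4}\, t^{-a/4}\,\|f\|_{C^{1-\varepsilon}} = s^{\theta/4}\, t^{-(\theta+2\varepsilon-2)/4}\,\|f\|_{C^{1-\varepsilon}}.
\end{equation*}
Choosing $\theta=2+\lambda\in(2,4)$, which is admissible since $\lambda\in(0,2)$, produces exactly the advertised exponents $s^{(2+\lambda)/4}$ and $t^{-\lambda/4-\varepsilon/2}$.

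The main delicate step is the parameter balancing in \eqref{est:U-P}: the perturbation used to invoke \eqref{est:U-space} for the random sup must be coordinated with the exponents $\theta,a$ in the semigroup factorisation so that the $s$- and $t$-powers come out to the stated values simultaneously; the specific choice $\lambda_0=\varepsilon_1=\varepsilon/2$ in the Hölder index of $U_r$, together with $\theta=2+\lambda$, realises this balance and is the reason the advertised bound takes precisely the form $t^{-\lambda/4-\varepsilon/2}s^{(2+\lambda)/4}$.
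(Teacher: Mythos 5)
Your proposal is correct and follows essentially the same route as the paper: a blockwise It\^o-isometry plus Gaussian hypercontractivity computation (with the interpolation $\min(x,1)\leq x^{\theta}$) for \eqref{est:Ut-Us}, Kolmogorov's continuity theorem for \eqref{est:U-space}, and the factorisation $P_{t+s}-P_t=(P_s-\mathrm{Id})P_t$ combined with both bounds of \Cref{lem:auxi-semigroup} applied to \eqref{est:U-space} for \eqref{est:U-P}. Your explicit choice $\theta=2+\lambda$ and the accompanying parameter bookkeeping merely spell out what the paper states tersely, so there is no substantive difference.
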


    \begin{proof}
        Note that \eqref{est:U-space} can be directly obtained by \eqref{est:Ut-Us} and Kolmogorov's continuity theorem.  If \eqref{est:U-space}  holds, after applying both bounds of \eqref{est:semi-Hol}, we derive
        \begin{align*}
        \sup_{r\in[0,T]}\left\|(P_{t+s}-P_t)U_r \right\|_{C_x^{-(2-\frac{d}{2})+\varepsilon}}&\lesssim t^{-\frac{\mu}{4}-\frac{\varepsilon}{2}} 
        \sup_{r\in[0,T]}\left\|(P_{s}-\mI_d)U_r \right\|_{C_x^{-2+\frac{d}{2}-\mu-2\epsilon}} 
       \\&\lesssim t^{-\frac{\mu}{4}-\frac{\varepsilon}{2}} s^{\frac{4-d+\mu}{4}}\sup_{r\in[0,T]}\left\|U_r \right\|_{C_x^{2-\frac{d}{2}-2\epsilon}} 
        \\&\lesssim t^{-\frac{\mu}{4}-\frac{\varepsilon}{2}} s^{\frac{4-d+\mu}{4}} \left\|U \right\|_{C_T^\epsilon C_x^{2-\frac{d}{2}-\epsilon}}. 
        \end{align*}
        Therefore, \eqref{est:U-P} follows from \eqref{est:U-space}. Thus, we only need to show \eqref{est:Ut-Us} in the following:

        We first assume $p$ to be sufficiently large. Considering $C_x^\alpha=\cB_{\infty,\infty}^\alpha$ and by the definition of $\cB_{\infty,\infty}^\alpha$, for $\alpha<1$, to verify \eqref{est:Ut-Us}, it is sufficient to show
        \begin{align}
            \label{est:Deltaj}
            \left(\mE|\Delta_j U_t(x)-\Delta_j U_s(x)|^p\right)^{\frac{1}{p}}\lesssim 2^{-j(2-\frac{d}{2}-\lambda)}|t-s|^{\frac{\lambda }{4}}.
        \end{align}
        Indeed, by \eqref{est:Deltaj}, {raising $p$-th power} and multiplying by $2^{jp(2-\frac{d}{2}-\lambda-\varepsilon)}$  imply that 
        \begin{align*}
            \mE \left\|U_t(x)-U_s(x)\right\|^p_{\cB_{p,p}^{2-\frac{d}{2}-\lambda-\varepsilon}}\lesssim |t-s|^{\frac{p\lambda}{4}}.
        \end{align*}
 Then, using the Sobolev embedding, we derive \eqref{est:Ut-Us}. From now on, we only need to show \eqref{est:Deltaj}.

Applying Gaussian hypercontractivity and It\^o's isometry, we derive 
        \begin{align*}
        ( \mE &|\Delta_j U_t-\Delta_j U_s|^p)^{\frac{2}{p}}\\&\lesssim   \mE|\Delta_j U_t-\Delta_j U_s|^2\\ & \lesssim  \mE \left|\int_s^t\int_{\mT^d}\Delta_j p_{t-r}(x-y)\xi(dr,dy)\right|^2+\mE \left|\int_0^s\int_{\mT^d}\Delta_j (p_{t-r}-p_{s-r})(x-y)\xi(dsr,dy)\right|^2
        \\ & = \int_s^t\|\Delta_j p_{t-r}\|_{L^2(\mT^d)}^2dr+\int_0^s\|\Delta_j (p_{t-r}-p_{s-r})\|_{L^2(\mT^d)}^2dr.
        \end{align*}
        From Parseval's identity and \eqref{def:kernel} we continue the above calculation and obtain 
        \begin{align*}
             ( \mE&|\Delta_j U_t-\Delta_j U_s|^p)^{\frac{2}{p}}\\ & \lesssim 
            \int_s^t\sum_{k\in\mZ^d\backslash\{0\}}\phi_j^2(k)e^{-2(t-r)\mu_k^2}dr+\int_0^s\sum_{k\in\mZ^d\backslash\{0\}}\phi_j^2(k)e^{-2(s-r)\mu_k^2}(1-e^{-(t-s)\mu_k^2})^2dr\\ & \lesssim \sum_{k\in\mZ^d\backslash\{0\}}\phi_j^2(k)\min(|t-s|,\mu_k^{-2})+\sum_{k\in\mZ^d\backslash\{0\}}\phi_j^2(k)\min(|t-s|\mu_k^{-2},\mu_k^{-2})
        \end{align*}
      since $e^{-x}\leq \min(x^{-1},1)$ and $1-e^{-x}\lesssim\min(x,x^\frac{1}{2},1)$, for any $x\geq0$.  From \eqref{def:eigen} we know that $\mu_k = 4 \pi^2 |k|^2$ holds for each $k \in \Z^d\setminus\{0\}$. Hence, for any $\theta\in [0,1]$ we have
      \begin{align*}
         ( \mE&|\Delta_j U_t-\Delta_j U_s|^p)^{\frac{2}{p}}\lesssim  2^{dj}\min(2^{-4j},|t-s|)\lesssim2^{dj}2^{-4j(1-\theta)}|t-s|^{\theta}=|t-s|^{\theta}2^{(4\theta-4+d)j}.
      \end{align*}
     Taking $\lambda=2\theta\in(0,2)$ we derive
      \begin{align}\label{est:Delta-UN}
           ( \mE&|\Delta_j U_t-\Delta_j U_s|^p)^{\frac{1}{p}}\lesssim  |t-s|^{\frac{\lambda}{4}}2^{(\lambda-2+\frac{d}{2})j}. 
      \end{align}
     Therefore, we obtain \eqref{est:Deltaj} and the proof is complete.
    \end{proof}

    \section{Proof of the main results}\label{sec:main-proof}
      Let us recall that we need $d\in\{1, 2, 3\}$, as we used $4-d>0$ in various estimates. We will split the main error term in various parts, and need  different estimates in the proof of the main results, we  first state the  auxiliary lemmas in the following and finish the proof of \Cref{thm:main}  in the end. 
   
      First, define the discretization schemes using the full OU-process by
    \begin{align}
        \hat u^N(t)&=P_t^Nu_0+\int_0^tP_{t-s}^N\mathbf{G}( \hat u^N(s)) d s +U(t),
        \label{def:sol-h-G}\\
        \hat u^{N,n}(t)&=P_t^Nu_0+\int_0^tP_{t-s}^N\mathbf{G}( \hat u^{N,n}(k_n(s))) d s +U(t)
        ,\quad t\geq0, \label{eq:sol-n-N-hat}  
 \end{align}
    The following result shows that all of the terms that are introduced above are well-defined. 
\begin{lemma}
    \label{lem:SPDE} 
    Suppose $u_0\in   C_x^{2-\frac{d}{2}}$ and $\|\bG\|_\infty, \|\partial \bG\|_\infty<\infty$,
  then there exists a unique mild solution  $u^{N}$ to \eqref{def:sol-h-Galerkin}, 
    $\hat u^{N}$ to \eqref{def:sol-h-G} accordingly; moreover, for small $\varepsilon\in(0,\frac{1}{2})$, $\lambda\in(0, {\frac{4-d}{2}})$ and $p\geq1$, there exists a constant $C(d, p,T,\|\bG\|_\infty, \|\partial \bG\|_\infty)$  so that the following holds:
   \begin{align*}
       \sup_{N\in \mN}\mE\|u^N\|^p_{C_T^{\lambda/4} C_x^{2-\frac{d}{2}-\lambda-\varepsilon}}+   \sup_{N\in \mN}\mE\|\hat u^N\|^p_{C_T^{\lambda/4} C_x^{2-\frac{d}{2}-\lambda-\varepsilon}}\leq C(1+\mE\|u_0\|_{C_x^1}^p).
   \end{align*}
\end{lemma}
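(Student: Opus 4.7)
My strategy is to treat existence/uniqueness and the a~priori bound as two essentially independent tasks, dispatching the former by a Picard contraction and deriving the latter term by term from \Cref{lem:auxi-semigroup} and \Cref{lem:auxi-semi-U}. For the existence of $u^{N}$ (resp.\ $\hat u^{N}$), I would first subtract off the stochastic convolution: setting $v^{N}:=u^{N}-U^{N}$ (resp.\ $\hat v^{N}:=\hat u^{N}-U$), the equation becomes the deterministic integral equation
\[
v^{N}(t)=P_t^{N}u_0+\int_0^t P_{t-s}^{N}\bG\!\left(v^{N}(s)+U^{N}(s)\right)ds,
\]
which, since $\bG$ is globally Lipschitz with $\|\partial\bG\|_\infty<\infty$ and since $P_{t-s}^{N}$ is contractive on $L^\infty$, admits a unique fixed point in $C_T L^\infty_x$ by the standard Banach contraction argument (first on a small time interval, then iterated using the $L^\infty$-boundedness of $\bG$ to cover $[0,T]$). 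The case of $\hat u^N$ is identical with $U^N$ replaced by $U$ as a given input.

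For the moment bound, I would split, for $t\in[0,T]$,
\[
u^{N}(t)=P_t^{N}u_0+I^{N}(t)+U^{N}(t),\qquad I^{N}(t):=\int_0^t P_{t-s}^{N}\bG(u^{N}(s))\,ds,
\]
and estimate each piece separately in $C_T^{\lambda/4}C_x^{1-\lambda-\varepsilon}$. For $P_\cdot^{N}u_0$, \Cref{lem:auxi-semigroup} with $\alpha=1-\lambda-\varepsilon$ and $\beta=1$ yields both the spatial bound $\|P_t^N u_0\|_{C^{1-\lambda-\varepsilon}}\lesssim\|u_0\|_{C^1}$ and, via $(\mathrm{I}-P_{t-s}^N)$ combined with the smoothing estimate, a $|t-s|^{\lambda/4}$-H\"older bound in time. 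For $I^{N}$, the key is that $\bG$ is uniformly bounded, so $\|\bG(u^{N}(s))\|_{L^\infty}\leq\|\bG\|_\infty$; by \Cref{lem:auxi-semigroup} with $\beta=0$ one has $\|P_{t-s}^N\bG(u^N(s))\|_{C^{1-\lambda-\varepsilon}}\lesssim(t-s)^{-(1-\lambda-\varepsilon)/4}\|\bG\|_\infty$, an integrable singularity. The time H\"older regularity of $I^{N}$ follows from the standard split
\[
I^{N}(t)-I^{N}(s)=\int_s^t P_{t-r}^{N}\bG(u^{N}(r))\,dr+\int_0^s\bigl(P_{t-s}^{N}-\mathrm{I}\bigr)P_{s-r}^{N}\bG(u^{N}(r))\,dr,
\]
controlling the second integrand by $|t-s|^{\theta/4}(s-r)^{-(1-\lambda-\varepsilon+\theta)/4}\|\bG\|_\infty$ for some $\theta\in[\lambda,3)$ via the second inequality in \eqref{est:semi-Hol}; both integrals then produce a $|t-s|^{\lambda/4}$ factor with deterministic constant depending only on $T,\|\bG\|_\infty$.

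For the noise term $U^{N}$, I would appeal to \Cref{lem:auxi-semi-U}, noting that its proof relies on a Littlewood–Paley block estimate \eqref{est:Delta-UN} that only uses the spectral representation of $U$ with positive coefficients; restricting to $|k|\leq N$ simply removes some of the non-negative terms, hence the same bound
$\mathbb{E}\|U^{N}_t-U^{N}_s\|^p_{C_x^{1-\lambda-\varepsilon}}\lesssim |t-s|^{\lambda p/4}$
holds uniformly in $N$. Combined with Kolmogorov's continuity theorem applied for $p$ large enough and then downgraded to arbitrary $p\geq 1$ by H\"older, this yields $\sup_N\mathbb{E}\|U^{N}\|^p_{C_T^{\lambda/4}C_x^{1-\lambda-\varepsilon}}<\infty$. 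Assembling the three pieces, I obtain the stated bound for $u^{N}$; the bound for $\hat u^{N}$ is identical, with the noise term furnished directly by \Cref{lem:auxi-semi-U} applied to $U$ itself. The main (but mild) obstacle is matching exponents so that all time exponents are at least $\lambda/4$ while all spatial smoothing integrals remain integrable at $r=t$; this is handled by choosing $\theta$ in the range noted above and then taking $\varepsilon>0$ sufficiently small.
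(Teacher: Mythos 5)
Your proposal is correct and follows essentially the same route as the paper: a Picard/Banach fixed-point argument in $C_TL^\infty_x$ on short intervals glued to $[0,T]$ (your subtraction of the stochastic convolution is only a cosmetic variant), followed by plugging the solution back into the mild formulation and bounding the three terms with the semigroup estimates of \Cref{lem:auxi-semigroup} and the Littlewood--Paley/Kolmogorov computation of \Cref{lem:auxi-semi-U}, which indeed holds uniformly in $N$ since truncating to $|k|\leq N$ only drops nonnegative terms. The paper's proof is just a terse sketch of exactly these steps, so your write-up amounts to supplying the details it omits.
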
 
\begin{proof}
   We follow the standard idea (see also  \cite[Proposition 4.1]{DKP}).  Here for the existence and uniqueness we apply fixed point argument.  By the global Lipschitz bound on $\bG$ and the fact that $\sup_N\|P_t^Nu\|_\infty\leq \|u\|_\infty$ one finds a unique fixed point of the mild formulation of    $u^{N}$ to \eqref{def:sol-h-Galerkin} 
   and $\hat u^{N}$ to \eqref{def:sol-h-G} in $C_TL_x^\infty$ for $T$ chosen small enough. For
any arbitrary time horizon $T > 0$, we can glue the solutions on subintervals together yielding a solution globally.  Then plugging the solution back in the mild formulation and
using the semigroup estimates for $P^N$ instead of $P$, one obtains the claimed regularity bounds by a similar calculation \Cref{lem:auxi-semi-U}.
\end{proof}

We split our error term in various terms 
    \begin{align}
        \label{est:uNn-v} u^{N,n}-v=u^{N,n}-  \hat u^{N,n}+   \hat u^{N,n}- \hat u^N+ \hat u^N-u^N+u^N-v=:I_1+I_2+I_3+I_4.
    \end{align}
We now bound all terms separately, where $I_2$ will be the critical term. The term $I_4$ is a fairly standard estimate based on a Galerkin projection. The two terms $I_1$ and  $I_3$ are also only error estimates using a Galerkin projection on the OU-process only. One is for the discrete scheme and the other for the continuous one.

In contrast to that in $I_2$ we  use a Girsanov transformation and the Stochastic Sewing Lemma to obtain an optimal rate of convergence. This term is the cricial part of our analysis.

    \subsection{Estimate of $I_1$}
    \begin{lemma} \label{lem:I1-est}
        Suppose $u_0 \in  C_x^{2-\frac{d}{2}}$ and $\|\bG\|_\infty, \|\partial\bG\|_\infty<\infty$. For $u^{N,n}$ from \eqref{eq:sol-n-N} and $\hat u^{N,n}$ from \eqref{eq:sol-n-N-hat},  we have for sufficiently small $\varepsilon>0$, for any $p\geq1$
        \begin{align}\label{est:I1}
             \left( \mE\sup_{t\in[0,T]}\|u^{N,n}(t)-  \hat u^{N,n}(t)\|_{L^2(\mT^d)}^p\right)^\frac{1}{p} 
             \lesssim 
             N^{-2+\frac{d}{2}+\varepsilon}.
        \end{align}
    \end{lemma}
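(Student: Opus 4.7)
The plan is to reduce the difference $I_1=u^{N,n}-\hat u^{N,n}$ to a control of the noise-truncation error $U-U^N$ via \Cref{lem:Gron}, and then to estimate $\|U-U^N\|_{L^2(\mT^2)}$ by Parseval using the almost-$H^1$ spatial regularity of the stochastic convolution $U$ supplied, up to a mild embedding, by \Cref{lem:auxi-semi-U}. Crucially, the two schemes \eqref{eq:sol-n-N} and \eqref{eq:sol-n-N-hat} share the same projected nonlinearity, projected initial datum, and propagator $P^N$, so the only honest source of error is the noise mismatch $U^N-U$.

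Concretely, subtracting \eqref{eq:sol-n-N-hat} from \eqref{eq:sol-n-N} yields the difference equation
\begin{align*}
I_1(t)=(U^N(t)-U(t))+\int_0^tP_{t-s}^N\bigl[\mathbf{G}(u^{N,n}(k_n(s)))-\mathbf{G}(\hat u^{N,n}(k_n(s)))\bigr]\,ds,
\end{align*}
which is exactly the setup of \Cref{lem:Gron} on $V=L^2(\mT^2)$ with Lipschitz map $F=\mathbf{G}$ (acting pointwise, Lipschitz constant $\|\partial\mathbf{G}\|_\infty$), contractive propagator family $\mathcal{S}(s,t)=P_{t-s}^N$ (uniform in $N,s,t$), retarder $\tau(s)=k_n(s)\leq s$, and free term $Z_t=U^N(t)-U(t)$. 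The required $L^p(\Omega;C_TL^2(\mT^2))$-integrability of $u^{N,n}$ and $\hat u^{N,n}$ follows routinely from $\|\mathbf{G}\|_\infty<\infty$ together with the $C_TL^2$-moment bounds on $U,U^N$ implicit in \Cref{lem:auxi-semi-U}. Invoking \Cref{lem:Gron} reduces the task to proving $\mE\sup_t\|U(t)-U^N(t)\|_{L^2(\mT^2)}^p\lesssim N^{-p(1-\varepsilon)}$. For this I use $U^N(t)=\Pi_NU(t)$ and Parseval, which gives for any $\varepsilon\in(0,1)$
\begin{align*}
\|U(t)-U^N(t)\|_{L^2(\mT^2)}^2=\sum_{|k|>N}|\langle U(t),e_k\rangle|^2\leq N^{-2(1-\varepsilon)}\|U(t)\|_{H^{1-\varepsilon}(\mT^2)}^2;
\end{align*}
the remaining moment bound $\mE\sup_t\|U(t)\|_{H^{1-\varepsilon}(\mT^2)}^p<\infty$ will come either via the embedding $C_x^{1-\varepsilon/2}\hookrightarrow H_x^{1-\varepsilon}$ on the torus applied to \Cref{lem:auxi-semi-U}, or by redoing its dyadic Parseval computation directly at the $H^{1-\varepsilon}$-level and invoking Kolmogorov's continuity theorem; Gaussian hypercontractivity on the blocks $\Delta_jU$ (each in the first Wiener chaos) lifts the bound to all $p\geq 1$.

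The main (and really only) technical point is this passage from the H\"older-scale estimate in \Cref{lem:auxi-semi-U} to a time-sup $L^p(\Omega)$-bound in the Sobolev scale. I expect the $\varepsilon$-loss relative to the pointwise-in-time rate $N^{-1}$ to be unavoidable: it reflects the divergence of $\sum_j 2^{-2j\varepsilon}$ as $\varepsilon\downarrow 0$ when summing dyadic frequencies at the $H^1$-threshold, which is precisely why the clean rate $N^{-1}$ cannot be obtained in an $L^\infty_tL^2_x$-norm. Notably, neither the stochastic sewing lemma nor any taming trick is needed for $I_1$, since here one is only comparing two schemes that differ solely in their driving noise.
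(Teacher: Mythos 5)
Your overall route coincides with the paper's in its first and decisive reduction: subtracting \eqref{eq:sol-n-N-hat} from \eqref{eq:sol-n-N} leaves only the noise mismatch plus a Lipschitz nonlinear term, and \Cref{lem:Gron} with $F=\bG$, $\cS(s,t)=P^N_{t-s}$, $\tau(s)=k_n(s)$, $Z=U^N-U$ reduces \eqref{est:I1} to $\mE\sup_{t\in[0,T]}\|U(t)-U^N(t)\|_{L^2(\mT^2)}^p\lesssim N^{-p(1-\varepsilon)}$; this is exactly what the paper does. Where you genuinely differ is in how this noise-truncation bound is proved. The paper runs a fresh chaos computation on the increments of $U-U^N$ (Gaussian hypercontractivity, It\^o isometry, Parseval on each Littlewood--Paley block), derives both an $N$-decaying bound with a small time-increment exponent and a trivial bound $2^{-j}$, interpolates, and then repeats the Kolmogorov-type argument behind \eqref{est:Delta-UN} to conclude \eqref{est:U-U^N}, i.e.\ a bound in $C_T^{\epsilon''}C_x^{\epsilon''}$, which in particular dominates the time-sup $L^2$ norm and is reused for the term $I_{43}$ in the proof of \eqref{est:I4}. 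You instead factorize deterministically: since $U^N=\Pi_N U$, Parseval gives $\|U(t)-U^N(t)\|_{L^2(\mT^2)}\lesssim N^{-(1-\varepsilon)}\|U(t)\|_{H^{1-\varepsilon}}$, and the uniform-in-time moment bound on $\|U\|_{H^{1-\varepsilon}}$ follows from \eqref{est:Ut-Us}--\eqref{est:U-space} with $\lambda$ small, the torus embedding $C_x^{1-\varepsilon/2}\hookrightarrow H_x^{1-\varepsilon}$, $U(0)=0$, and Kolmogorov plus hypercontractivity to reach all $p$. This is correct and arguably more economical, since it recycles \Cref{lem:auxi-semi-U} rather than redoing a stochastic computation for the difference process; what it does not deliver is the slightly stronger H\"older-in-time, positive-spatial-regularity statement \eqref{est:U-U^N} that the paper records. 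One minor imprecision in your closing remark: the $\varepsilon$-loss is not created by taking the supremum in time --- already for fixed $t$ the field $U(t)$ just fails to lie in $H^1_x$ in two dimensions (logarithmic divergence of $\sum_k \mu_k^{-1}$) --- but this does not affect the validity of your argument.
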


    \begin{proof}
        By \eqref{eq:sol-n-N-hat}, \eqref{eq:sol-n-N} we obtain 
             \begin{align*}
              u^{N,n}(t)-  \hat u^{N,n}(t) =
              \left[  U(t) - U^N(t) \right] +
             \int_0^t 
             P_{t-s}^N
             \left[ \mathbf{G}(u^{N,n}(k_n(s))) 
             - \mathbf{G}( \hat u^{N,n}(k_n(s))) 
             \right]
             ds.
        \end{align*}
        Thus, by applying \Cref{lem:Gron} together with Lipschitz continuity of $\bG$, we derive  
    \begin{align*}
         \mE\sup_{t\in[0,T]}\|u^{N,n}(t)-  \hat u^{N,n}(t)\|_{L^2(\mT^d)}^p
             & \lesssim
             \mE \sup_{t\in[0,T]}
             \left \| U(t) - U^N(t)
             \right \|_{L^2(\mT^d)}^p.
    \end{align*}
    By the definition of $U$ and $U^N$, we have 
    \begin{align*}
        (U_t-U_t^N)(x)=\sum_{k\in\mZ^d\backslash\{0\}|k|>N}\int_0^te^{-(t-r)\mu_k^2}d\beta_k(r)e_k(x),
    \end{align*}
    it implies for any $0\leq s\leq t$ (similar to the analysis of proving Lemma \ref{lem:auxi-semi-U}), by Gaussian hypercontractivity, It\^o's isometry and Parseval identity,
    \begin{align}\label{est:U-U^N-1-41}
       & \big(\mE|\Delta_j(U_t-U_t^N)-(U_s-U_s^N)|^p(x)\big)^\frac{1}{p}
      \nonumber  \\&\lesssim \Big(\sum_{k\in\mZ^d\backslash\{0\},|k|>N}\int_s^t\phi_j^2(k)e^{-2\mu_k^2(t-r)}dr+\sum_{k\in\mZ^d\backslash\{0\},|k|>N}\int_0^s\phi_j^2(k) e^{-\mu_k^2(s-r)}(1-e^{-\mu_k^2(t-r)})dr\Big)^\frac{1}{2} \nonumber\\&\lesssim 
        \Big(\sum_{|k|>N}\frac{1-e^{-2\mu_k^2(t-s)}}{2\mu_k^2}+\sum_{k\in\mZ^2\backslash\{0\},|k|>N}\frac{(1-e^{-2\mu_k^2(t-s)})(1-e^{-\mu_k(t-s)})^2}{2\mu_k^2}\Big)^\frac{1}{2}
      \nonumber  \\&\lesssim \Big((t-s)^{\epsilon_0}\sum_{|k|>N}\frac{1}{K^{4-2\epsilon_0}}\Big)^\frac{1}{2}
      \nonumber  \\&\lesssim N^{\frac{d}{2}-2+\epsilon_0}(t-s)^{\frac{\epsilon_0}{2}}
    \end{align}
    using that for any $\epsilon_0\in[0,1]$, $1-e^{-x}\leq x^{\epsilon_0}$, $x\geq0$. Moreover, we also have the trivial bound on $N$
    \begin{align}\label{est:U-U^n-2-41}
    &\big(\mE|\Delta_j(U_t-U_t^N)-(U_s-U_s^N)|^p(x)\big)^\frac{1}{p}
    \nonumber\\&\lesssim   \Big(\sum_{|k|>N}\int_s^t\phi_j^2(k)e^{-2\mu_k^2(t-r)}dr+\sum_{|k|>N}\int_0^s\phi_j^2(k) e^{-\mu_k^2(s-r)}(1-e^{-\mu_k^2(t-r)})dr\Big)^\frac{1}{2}
    \nonumber\\&\lesssim \Big(\int_s^t2^{2j}e^{-32\pi^42^{4j}(t-r)}dr+\int_0^s2^{2j}e^{-32\pi^42^{4j}(s-r)}(1-e^{-32\pi^42^{4j}(t-r)})dr\Big)^{\frac{1}{2}}
 \nonumber   \\&\lesssim 2^{-j}.
    \end{align}
    where in the penultimate inequality we applied the change of variables. Therefore, 
    using interpolation between \eqref{est:U-U^N-1-41} and \eqref{est:U-U^n-2-41}, we derive  for any $\epsilon'\in(0,1)$
    \begin{align*}
       &\big(\mE|\Delta_j(U_t-U_t^N)-(U_s-U_s^N)|^p(x)\big)^\frac{1}{p}\lesssim 2^{-j\epsilon'}N^{(\frac{d}{2}-2+\epsilon)(1-\epsilon')}(t-s)^{(1-\epsilon')\frac{\epsilon_0}{2}} . 
    \end{align*}
    By choosing $\epsilon,\epsilon'$ sufficiently small and $p$ sufficiently large, we can repeat the argument of proving the estimate \eqref{est:Delta-UN} to obtain 
    \begin{align}
        \label{est:U-U^N}
        \Big(\mE \|U-U^N\|_{C_T^{\epsilon''}C_x^{\epsilon''}}^p\Big)^\frac{1}{p}\lesssim N^{\frac{d}{2}-2+\epsilon}
    \end{align}
    by taking small $\epsilon''>0$.
    \end{proof}
    \subsection{Estimate of $I_2$}
    \begin{lemma} \label{lem:I2-est}
        Suppose $u_0\in C_x^{2-\frac{d}{2}}$ and $\|\bG\|_\infty, \|\partial^i \bG\|_\infty<\infty, i=1,2,\ldots,{\lceil 3-\frac{d}{2}\rceil}$. For $\hat u^{N,n}$ from \eqref{eq:sol-n-N-hat} and $\hat u^{N}$ from \eqref{def:sol-h-G}, for $\varepsilon\in \left(0,\frac{1}{2}\right)$ and $p\geq1$, we have 
        \begin{align}\label{est:I2}
             \left( \mE\sup_{t\in[0,T]}\|\hat u^{N}(t)-  \hat u^{N,n}(t) \|_{L^2(\mT^d)}^p\right)^\frac{1}{p}\leq Cn^{-\frac{6-d}{4}+\varepsilon}
        \end{align}
        where $C=C(T,p,d,\varepsilon, u_0)$.
    \end{lemma}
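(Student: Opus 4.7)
The plan is to reduce the estimate via the Gronwall lemma (\Cref{lem:Gron}) and then to attack the remaining source term with the stochastic sewing lemma (\Cref{lem:SSL}) in order to surpass the naive rate $n^{-1/4}$ dictated by the temporal H\"older regularity of $\hat u^N$. First I would write
\begin{align*}
  \hat u^N(t)-\hat u^{N,n}(t)
  &= \int_0^t P_{t-s}^N\!\left[\bG(\hat u^N(s))-\bG(\hat u^N(k_n(s)))\right]ds\\
  &\quad +\int_0^t P_{t-s}^N\!\left[\bG(\hat u^N(k_n(s)))-\bG(\hat u^{N,n}(k_n(s)))\right]ds,
\end{align*}
and invoke \Cref{lem:Gron} with $V=L^2(\mT^2)$, $F=\bG$, $\cS(s,t)=P_{t-s}^N$, $\tau=k_n$ (all hypotheses are available since $\bG$ is globally Lipschitz and $\|P_{t-s}^N\|_{L^2\to L^2}\leq 1$) to reduce the claim to
\begin{align*}
  \mE\sup_{t\in[0,T]}\|Z_t\|^p_{L^2(\mT^2)}\lesssim n^{p(-1+\varepsilon)},\qquad
  Z_t:=\int_0^t P_{t-s}^N\!\left[\bG(\hat u^N(s))-\bG(\hat u^N(k_n(s)))\right]ds.
\end{align*}

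For fixed $t^\ast\in(0,T]$ I would bound $\|Z_{t^\ast}\|_{L^p(\Omega;L^2)}$ by applying \Cref{lem:SSL} to the $L^2(\mT^2)$-valued germ
\begin{align*}
  A_{a,b}:=\mE_a\int_a^b P_{t^\ast-r}^N\!\left[\bG(\hat u^N(r))-\bG(\hat u^N(k_n(r)))\right]dr,\qquad 0\leq a<b\leq t^\ast.
\end{align*}
Condition \eqref{con:SSL-1} follows from the Lipschitz bound on $\bG$ together with the H\"older estimate $\|\hat u^N\|_{C_T^{\lambda/4}C_x^{1-\lambda-\varepsilon}}$ having finite $L^p(\Omega)$-moment, which is provided by \Cref{lem:SPDE}: taking $\lambda$ close to $1$ yields $\|\bG(\hat u^N(r))-\bG(\hat u^N(k_n(r)))\|_{L^2(\mT^2)}\lesssim n^{-\lambda/4}$, which integrates in $r$ to the SSL-required $(b-a)^{1/2+\varepsilon_1}$ shape with $n^{-\lambda/4}$ gained for free. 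For the more delicate condition \eqref{con:SSL-2}, the gain comes from the quasi-Markov structure of $\hat u^N$: on any grid cell $[t_j,t_{j+1}]$ one has
\begin{align*}
  \hat u^N(r)-\hat u^N(t_j)=(P_{r-t_j}^N-I)\hat u^N(t_j)+\bigl[U(r)-P_{r-t_j}^N U(t_j)\bigr]+\int_{t_j}^r P_{r-\rho}^N\bG(\hat u^N(\rho))d\rho,
\end{align*}
so conditionally on $\cF_{t_j}$ the noise contribution is centred Gaussian plus an $O(r-t_j)$ deterministic drift. A first-order Taylor expansion of $\bG$ around $\hat u^N(t_j)$ cancels the Gaussian term after taking $\mE_{t_j}$, and the semigroup smoothing of $P^N_{t^\ast-r}$ from \eqref{est:semi-Hol} absorbs the fact that $\partial\bG$ is only bounded. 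Tracking exponents leads to a bound of the form $(t^\ast-b)^{-\delta_2}(b-a)^{1+\varepsilon_2}$ satisfying $1+\varepsilon_2-\delta_2>0$.

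Applying \eqref{est:SSL-3} then identifies the sewn process with $Z_{t^\ast}$ and gives $\|Z_{t^\ast}\|_{L^p(\Omega;L^2)}\lesssim n^{-1+\varepsilon}$ uniformly in $t^\ast\in[0,T]$. The pathwise supremum $\mE\sup_{t\in[0,T]}\|Z_t\|_{L^2}^p$ is then recovered by a Kolmogorov continuity argument: a second, analogous SSL application localised to the window $[s,t]$ produces a time-H\"older bound on $Z_t-Z_s$ with the same $n^{-1+\varepsilon}$ prefactor, and for $p$ large enough the standard Kolmogorov criterion converts this into the claimed supremum bound.

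The main obstacle will be the verification of \eqref{con:SSL-2}, which requires carefully balancing the singular factor $(t^\ast-b)^{-\delta_2}$ against the gain $(b-a)^{1+\varepsilon_2}$ produced by the conditional-Gaussian cancellation. Because $\bG$ is only $C^1$ (not $C^2$), the first-order Taylor remainder must be handled by trading a spatial derivative of $\bG$ against a quarter-power of time in the semigroup smoothing via \eqref{est:semi-Hol} and the Besov estimate \eqref{prop:Besov}, which in turn constrains the admissible $(\varepsilon_2,\delta_2)$ and ultimately pins down the $n^{-1+\varepsilon}$ rate.
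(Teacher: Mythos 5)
Your overall skeleton (splitting off the two integrals, Gr\"onwall via \Cref{lem:Gron} to reduce to the source term driven by $\bG(\hat u^N(s))-\bG(\hat u^N(k_n(s)))$, stochastic sewing for that term, then a Kolmogorov argument for the supremum) is the same as the paper's, but two essential ingredients are missing, and without them the argument does not reach rate $n^{-1+\varepsilon}$. First, your verification of \eqref{con:SSL-1} only produces $C_1\sim n^{-\lambda/4}\leq n^{-1/4}$: measuring the time increment of $\hat u^N$ in a space of positive spatial regularity caps its time-H\"older exponent at $1/4$, and since the sewing conclusions \eqref{est:SSL-1} and \eqref{est:SSL-3} carry $C_1$ into the bound for the sewn process, the final rate can never beat $C_1$; you would land at $n^{-1/4}$. (Note also that with your germ $A_{a,b}=\mE_a\int_a^b\cdots$, condition \eqref{con:SSL-2} is satisfied trivially, since $\mE_a\delta A_{a,u,b}=0$ by the tower property; so all the delicate work you allocate to \eqref{con:SSL-2} is in fact needed inside \eqref{con:SSL-1}.) The paper obtains $C_1\sim n^{-1+\varepsilon}$ by measuring $\bG(\cdot_r)-\bG(\cdot_{k_n(r)})$ in the negative space $C_x^{-1-\varepsilon}$ via \eqref{G:Lip}, so that the increment is taken in $C_x^{-1+\varepsilon}$ where the time regularity is almost $1/2$, paying with the semigroup singularity $(t^\ast-r)^{-\frac{1+\varepsilon}{4}}$, and by splitting into $|t-s|\leq 3/n$ (where the surplus power of $t-s$ converts into a further $n^{-1/2}$) and $|t-s|>3/n$ (where the conditional expectation is computed explicitly); neither the negative-norm trick nor the case split appears in your plan.

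Second, your conditional-Gaussian cancellation is applied to $\hat u^N$ itself, and this step does not survive scrutiny: conditionally on $\cF_a$, with $a$ the left endpoint of the sewing interval (which is macroscopically far from $k_n(r)$ in general), $\hat u^N(r)$ is \emph{not} a centred Gaussian plus an $\cF_a$-measurable shift, because the accumulated nonlinear drift $\int_a^r P^N_{r-\rho}\bG(\hat u^N(\rho))d\rho$ is random and of order $|r-a|$, not $O(n^{-1})$; moreover your first-order Taylor expansion of $\bG$ leaves a remainder that cannot be improved beyond the size of the increment, since only $\|\partial\bG\|_\infty<\infty$ is assumed. The paper circumvents both obstacles with a step you omit entirely: a Girsanov change of measure which, for exactly this term, replaces $\hat u^N$ by the Gaussian process $\tilde U=U+P^Nu_0$, for which $\mE_s\bG(\tilde U_r)=(\cP_{Q(r-s)}\bG)(P_{r-s}\tilde U_s)$ holds exactly; the smoothing of $\bG$ is then done by the Gaussian convolution $\cP_{Q(\cdot)}$ rather than by any Taylor expansion. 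Without the Girsanov reduction (or some substitute handling of the random drift and of the $C^1$-only nonlinearity), the key sewing bounds at rate $n^{-1+\varepsilon}$ cannot be verified along the route you describe.
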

   
    \begin{proof}[Proof of \Cref{lem:I2-est}]
         It  writes 
       \begin{align}
          & \hat u^{N}(t)-  \hat u^{N,n}(t)\nonumber\\ & = \int_0^t P_{t-s}^N\mathbf{G}( \hat u^N(s)) -P_{t-s}^N\mathbf{G}( \hat u^{N,n}(k_n(s))) d s\nonumber\\
            & = \int_0^t P_{t-s}^N\left(\mathbf{G} (\hat u^N(s))-\mathbf{G}(\hat u^{N}(k_n(s)))\right) d s+\int_0^t P_{t-s}^N\left( \mathbf{G}(\hat u^{N}(k_n(s)))-\mathbf{G}(\hat u^{N,n}(k_n(s))) \right)d s
           \nonumber \\
            & = :I_{21}(t)+I_{22}(t).\label{def:I2-1-2}
       \end{align}
      Our idea is to bound $\hat u^{N}(t)-  \hat u^{N,n}(t)$ from above 
      via Gr\"onwall \Cref{lem:Gron}, in which the second term $I_{22}$ can be buckled in the argument, while the first one $I_{21}$ takes the role of the error term $Z$ in \Cref{lem:Gron} and needs to be estimated.
 In order to obtain the desired estimate for $I_{21}$, we divide the whole proof into the following steps.
       \begin{itemize}
           \item[\textbf{Step 1.}] \textbf{Reduction via Girsanov Theorem:}
       \end{itemize}
       We follow the idea of \protect{\cite[Corollary 4.7]{DKP}}. Denote the probability measure 
       \begin{align*}
           \mQ:=\rho d\mP:=\exp \left(-\int_0^T\int_{\T^2}\bG(\hat u^N(s,y))\xi(dy,ds)-\frac{1}{2}\int_0^T\int_{\T^2}|\bG(\hat u^N(s,y))|^2dyds \right) d\mP.
       \end{align*}
       From Girsanov Theorem (see \protect{\cite[Theorem 10.14]{DPG}}) we know that $\mQ$ here indeed is a probability measure since $\xi(dy,ds)+\bG(\hat u^N(s,y))dyds$ defines a space-time white noise measure under $\mQ$ independent of $\cF_0$.  
     Note that for the Novikov-condition by the boundedness of $G$ one can show the bound $\mE(\rho^{-1})\leq C(T)<\infty$ and similarly for any other power like $\mE(\rho^{-2})$. 
     
       It implies that 
       \begin{align*}
           \text{law}^\mQ (\hat u^N)= \text{law}^\mP (U_t+P_t^Nu_0).
       \end{align*}
  Define the functional on the process $\cZ$
  \begin{align*}
      \cH(Z):=\sup_{t\in[0,T]} \left\| \int_0^tP_{t-s}^N\left(\mathbf{G}( \cZ(s))-\mathbf{G}(\cZ(k_n(s)))\right) ds\right\|.
  \end{align*}
  Then, via Girsanov Theorem and Cauchy-Schwarz inequality, we have 
  \begin{align*}
      \mE|\cH(\hat u^N)|^p & = \mE^\mP(\rho\rho^{-1}|\cH(\hat u^N)|^p)
      \\ & = \mE^\mQ(\rho^{-1}|\cH(\hat u^N)|^p)
      \\ & \lesssim[\mE^\mQ(\rho^{-2})]^\frac{1}{2}(\mE^\mQ|\cH(\hat u^N)|^{2p})^\frac{1}{2}
      \\ & = [\mE^\mQ(\rho^{-2})]^\frac{1}{2}(\mE^\mP|\cH(U_t+P_t^Nu_0)|^{2p})^\frac{1}{2}
      \\ & \lesssim(\mE^\mP|\cH(U_t+P_t^Nu_0)|^{2p})^\frac{1}{2}.
  \end{align*}
  Therefore, if we denote for $0\leq t\leq t^\ast\leq T,$
  \begin{align}
      \label{def:I21-O} \hat I_{21}(t,t^\ast):=\int_0^t P_{t^\ast-s}^N\left( \mathbf{G}(U_s+P_s^Nu_0)-\mathbf{G}(U_{k_n(s)}+P_{k_n(s)}^Nu_0)\right) d s,
  \end{align}
  then the estimate of $I_{21}$ is reduced to 
  \begin{align}
      \label{est:I-21-r} \left( \mE\sup_{t\in[0,T]}\|I_{21}(t) \|_{L^2(\mT^d)}^p\right)^\frac{1}{p}\lesssim  \left( \mE\sup_{t\in[0,T]}\|\hat I_{21}(t,t) \|_{L^2(\mT^d)}^p\right)^\frac{1}{p}.
  \end{align}
 \begin{itemize}
     \item[\textbf{Step 2.}] \textbf{Estimates of $\hat I_{21}$ via \emph{Stochastic Sewing}:}
       \end{itemize}
       For convenience, we denote $\tilde U_t:=U_t+P_t^Nu_0$. In this step, in order to obtain estimates of \eqref{est:I-21-r}, we first show the following:
       \begin{lemma}
          For any $t^\ast\in (0,T]$ and $(s,t)\in[0,t^\ast]_\leq$
       \begin{align}\label{est:step2-final}
       \left( \mE \left\|\hat I_{21}(t,t^\ast) -\hat I_{21}(s,t^\ast) \right\|_{L^\infty(\mT^d)}^p\right)^\frac{1}{p} & = \left( \mE \left\|\int_s^t P_{t^\ast-r}^N\left( \mathbf{G}(\tilde U_r)-\mathbf{G}(\tilde U_{k_n(r)})\right) d r \right\|_{L^\infty(\mT^d)}^p\right)^\frac{1}{p}\nonumber\\ & \lesssim  (t-s)^{\frac{1}{2}+\frac{\varepsilon}{2}}n^{-\frac{6-d}{4}+\varepsilon}.
       \end{align} 
       \end{lemma}
       
       According to the property of $\cB^\alpha_{p,q}$ and Sobolev embedding, it is sufficient to show \eqref{est:step2-final} via showing
       \begin{align}
           \label{est:step2-inter}
           \left( \mE\left|\int_s^t\Delta_j P_{t^\ast-r}^N\left( \mathbf{G}(\tilde U_r)-\mathbf{G}(\tilde U_{k_n(r)})\right) d r\right|^p\right)^\frac{1}{p}\lesssim 2^{-j\varepsilon}(t-s)^{\frac{1}{2}+\frac{\varepsilon}{2}}n^{-\frac{6-d}{4}+\varepsilon}.
       \end{align}
       More precisely, \eqref{est:step2-inter} implies that
       \begin{align*}
           \left( \mE \left\|\int_s^t P_{t^\ast-r}^N\left( \mathbf{G}(\tilde U_r)-\mathbf{G}(\tilde U_{k_n(r)})\right) d r \right\|_{\cB^{\alpha}_{p,p}}^p\right)^\frac{1}{p}\lesssim  (t-s)^{\frac{1}{2}+\frac{\varepsilon}{2}}n^{-\frac{6-d}{4}+\varepsilon}
       \end{align*}
       by Sobolev embedding $\cB_{p,p}^\alpha\hookrightarrow C^{\alpha-\frac{d}{p}}\hookrightarrow L^\infty$ for sufficiently large $p$.
Therefore, in the following, we only need to show \eqref{est:step2-inter}. To do so, the idea is to apply \Cref{lem:SSL}. Let $t^\ast\leq T$
       \begin{align}
           \label{def:A} A_{s,t}^{(j)}:=\mE_s\int_s^t \Delta_j P_{t^\ast-r}^N\left( \mathbf{G}(\tilde U_r)-\mathbf{G}(\tilde U_{k_n(r)})\right) d r,\quad (s,t)\in[0,T]_\leq^*, t\leq t^\ast.
       \end{align}
       We first verify the condition \eqref{con:SSL-1}  of \Cref{lem:SSL}. To do this, we consider the following two cases.
       
       \begin{enumerate}
           \item $|t-s|\leq \frac{3}{n}$.
           \\
           
       Following from the idea of showing \eqref{est:U-space} and \eqref{est:U-P}, by \eqref{est:semi-Hol}, we have for $\mu\in[0, {d}), \lambda\in(0,4-d)$
       \begin{align*}
           \|P_{t+s}u_0-P_{t}u_0\|_{C_x^{\frac{d}{2}-2+\varepsilon}}\lesssim s^\frac{{4-d}+\mu}{4}t^{-\frac{\varepsilon}{2}-\frac{\mu}{4}}\|u_0\|_{C_x^{2-\frac{d}{2}-\varepsilon}},\\
           \|P_{t+s}u_0-P_{t}u_0\|_{C_x^{-2+\frac{d}{2}+\varepsilon}}\lesssim s^\frac{2-2\varepsilon}{4}\|u_0\|_{C_x^{2-\frac{d}{2}-\varepsilon}},
       \end{align*}
       which together with \Cref{lem:auxi-semi-U} implies that 
          \begin{align}
            \label{est:tiled-Ut-Us}
            \mE \left\|\tilde U_t-\tilde U_s \right\|^p_{C_x^{2-\frac{d}{2}-\lambda-\varepsilon}}&\leq C|t-s|^{\frac{\lambda p}{4}},
            \\
           \mE \left\| \tilde U\right\|_{C_T^{\frac{\lambda}{4}}C_x^{2-\frac{d}{2}-\lambda-\varepsilon}}&\leq C, \label{est:tiled-U-space}
           \\
      \left( \mE \left\|\sup_{r\in[0,T]}(P_{t+s}-P_t)\tilde U_r \right\|^p_{C_x^{-2+\frac{d}{2}+\varepsilon}}\right)^\frac{1}{p}&\leq \tilde C t^{-\frac{\mu}{4}-\frac{\varepsilon}{2}}s^{\frac{4-d+\mu}{4}}.
      \label{est:tiledU-P}
        \end{align}
   Observe that for Lipschitz continuous  $\mathbf{G}$   we have
   \begin{align}\label{G:Lip}
       \|\mathbf{G}(u_1)-\bG(u_2)\|_{C_x^{-2+\frac{d}{2}-\varepsilon}}\lesssim \|u_1-u_2\|_{C_x^{-2+\frac{d}{2}+\varepsilon}}.
   \end{align}
   To be more precise,
   for $u_1,u_2\in C_x^{2-\frac{d}{2}-\varepsilon}$, 
           by the mean value theorem we have 
 \begin{align*}
 \mathbf{G}(u_1)-\mathbf{G}(u_2)=\int_0^1\nabla  \mathbf{G}(\theta u_1+(1-\theta)u_2)d\theta \cdot (u_1-u_2),  
 \end{align*}
 which yields
 \begin{align*}
    \|\mathbf{G}(u_1)-\mathbf{G}(u_2)\|_{C_x^{-2+\frac{d}{2}-\varepsilon}}&= \left\|\int_0^1\nabla   \mathbf{G}(\theta u_1+(1-\theta)u_2)d\theta \cdot (u_1-u_2)\right\|_{C_x^{-2+\frac{d}{2}-\varepsilon}} \\&\leq  \sup_{\theta\in[0,1]}\|\nabla   \mathbf{G}(\theta u_1+(1-\theta)u_2) \cdot (u_1-u_2)\|_{C_x^{-2+\frac{d}{2}-\varepsilon}}.
 \end{align*}
  Furthermore, from the rule for the product of two distributions \cite[Section 2]{BCD}, we have: for any $g\in C^\theta, h\in C^\beta$ with $\beta,\theta\in\mR$ and $\beta+\theta>0$, the following holds
  \begin{align}\label{est:product-est}
      \|g\cdot h\|_{C^{\min(\theta,\beta)}}\leq C(\theta,\beta)\|g\|_{C^\theta}\|h\|_{C^\beta}
  \end{align}
  and thus, by applying \eqref{est:product-est}, \eqref{prop:Besov}, we derive
  \begin{align*}
  \|\mathbf{G}(u_1)-\mathbf{G}(u_2)\|_{C_x^{-2+\frac{d}{2} { -\varepsilon}}}\lesssim  &  \sup_{\theta\in[0,1]}\|\nabla   \mathbf{G}(\theta u_1+(1-\theta)u_2)\|_{C_x^{2-\frac{d}{2}-\frac{\varepsilon}{2}}} \| u_1-u_2\|_{C_x^{-2+\frac{d}{2}+\varepsilon}} 
 \\ \approx &\sup_{\theta\in[0,1]}\|   \mathbf{G}(\theta u_1+(1-\theta)u_2)\|_{C_x^{3-\frac{d}{2}-\frac{\varepsilon}{2}}} \| u_1-u_2\|_{C_x^{-2+\frac{d}{2}+\varepsilon}}
 \\\lesssim &\|\bG\|_{C^{3-\frac{d}{2}}}\| u_1-u_2\|_{C_x^{-2+\frac{d}{2}+\varepsilon}}.
  \end{align*}

 It reads, by applying \eqref{G:Lip}, 
 \eqref{est:tiledU-P} and \eqref{est:tiled-Ut-Us} and by taking $u_1=\bar U_r, u_2=\bar U_{k_n(r)}$
        \begin{align}\label{est:Ast-Lp1}
            \|A_{s,t}^{(j)}\|_{L^p(\Omega)} & \lesssim  \left( \mE\left|\int_s^t\Delta_j P_{t^\ast -r}^N\left( \mathbf{G}(\tilde U_r)-\mathbf{G}(\tilde U_{k_n(r)})\right) d r\right|^p\right)^\frac{1}{p}
           \nonumber \\& \lesssim 2^{-j\varepsilon}\left(\mE\left\|\int_s^tP_{t^\ast-r}^N\left( \mathbf{G}(\tilde U_r)-\mathbf{G}(\tilde U_{k_n(r)})\right) d r \right\|_{C_x^\varepsilon}^p\right)^\frac{1}{p}
          \nonumber    \\ & \lesssim 2^{-j\varepsilon}\int_s^t(t^\ast-r)^{-\frac{2-d/2+\varepsilon}{4}}\left(\mE\left \| \left( \mathbf{G}(\tilde U_r)-\mathbf{G}(\tilde U_{k_n(r)})\right) \right\|_{C_x^{d/2-2}}^p\right)^\frac{1}{p}d r
        \nonumber      \\& \lesssim 2^{-j\varepsilon}\int_s^t(t^\ast-r)^{-\frac{(2-d/2+\varepsilon)}{4}}\nonumber\\&\qquad\qquad\qquad\quad
        \times
        \left(\mE\big\|\int_0^1\nabla  \mathbf{G}((1-\theta)\tilde U_r+(1-\theta)\tilde U_{k_n(r)})(\tilde U_r-\tilde U_{k_n(r)})d\theta\big\|_{C_x^{d/2-2}}^p\right)^\frac{1}{p}d r
            \nonumber     
            \\& \lesssim 2^{-j\varepsilon}
            \| \bG\|_{C^{3-\frac{d}{2}}}
           (t^\ast-t)^{-\frac{(2-d/2+\varepsilon)}{4}}
            \int_s^t(r-k_n(r))^{-\frac{4-d-\varepsilon}{4}}d r
             \nonumber      
             \\   & \lesssim 2^{-j\varepsilon}
             (t^\ast-t)^{-\frac{(2-d/2+2\varepsilon)}{4}}(t-s)
             (r-k_n(r))^{\frac{4-d-\varepsilon}{4}}
             \nonumber     
             \\ & \lesssim 2^{-j\varepsilon}
             (t^\ast-t)^{-\frac{(2-d/2+2\varepsilon)}{4}}(t-s)^{\frac{1}{2}+\varepsilon}n^{-\frac{6-d}{4}+\varepsilon}.
        \end{align}
        
       The last inequality holds due to $|t-s|\leq 3n^{-1}$.

           \item $|t-s|> \frac{3}{n}$. \\
          Denote $\tilde k_n(s)=k_n(s)+\frac{2}{n}$ such that $\frac{r-s}{2}\geq n^{-1}$ for $r\in[\tilde k_n(s),t]$, which implies that $k_n(r)-s\geq \frac{r-s}{2}$ for $r\in[\tilde k_n(s),t]$. We write
           \begin{align}
               \label{est:Ast-2-12}
               \int_s^t P_{t^\ast-r}^N\left( \mathbf{G}(\tilde U_r)-\mathbf{G}(\tilde U_{k_n(r)})\right) d r=\left(\int_s^{\tilde k_n(s)}+\int^t_{\tilde k_n(s)}\right) P_{t^\ast-r}^N\left( \mathbf{G}(\tilde U_r)-\mathbf{G}(\tilde U_{k_n(r)})\right) d r=:S_1+S_2.
           \end{align}
           Since $\tilde k_n(s)-s\leq 2n^{-1}$, by the same idea as \eqref{est:Ast-Lp1}, we derive
           \begin{align}\label{est:Ast-Lp2}
               \|S_1\|_{L^p(\Omega)}\lesssim 2^{-j\varepsilon}
               (t^\ast-t)^{-\frac{(2-d/2+2\varepsilon)}{4}}(t-s)^{\frac{1}{2}+\varepsilon}n^{-\frac{6-d}{4}+\varepsilon}.
           \end{align}
           For $S_2$, we use that $\tilde U_r(x)=P_{r-s}\tilde U_s(x)+\int_s^r\int_{\mT^d}p_{r-v}(x-y)\xi(dv,dy)$ and $P_{r-s}\tilde U_s(x)\in\cF_s$ and $\int_s^r\int_{\mT^d}p_{r-v}(x-y)\xi(dv,dy)$ is independent of $\cF_s$. Moreover, we know that $\int_s^r\int_{\mT^d}p_{r-v}(x-y)\xi(dv,dy)$ follows Gaussian distribution with $0$ mean and variance
           \begin{align}\label{def:Q}
            Q(r-s):=\mE\left(\int_s^r\int_{\mT^d}p_{r-v}(x-y)\xi(dv,dy)\right)^2.
           \end{align}
           
          If we denote $\cP_{Q(r-s)}$ as the heat semigroup on $\mR^d$ with variance $Q(r-s)$, we can write
           \begin{align}
               \mE_sS_2(x) & = \int_{\tilde k_n(s)}^t\Delta_jP_{t^*-r}^N[(\cP_{Q(r-s)} \mathbf{G})(P_{r-s}\tilde U_s)-(\cP_{Q(k_n(r)-s)} \mathbf{G})(P_{k_n(r)-s}\tilde U_s)](x)dr \nonumber\\ & = \int_{\tilde k_n(s)}^t\Delta_jP_{t^*-r}^N[(\cP_{Q(r-s)} \mathbf{G})(P_{r-s}\tilde U_s)-(\cP_{Q(r-s)} \mathbf{G})(P_{k_n(r)-s}\tilde U_s)](x)dr \nonumber\\&\quad+\int_{\tilde k_n(s)}^t\Delta_jP_{t^*-r}^N[\left((\cP_{Q(r-s)} \mathbf{G})-(\cP_{Q(k_n(r)-s)} \mathbf{G})\right)(P_{k_n(r)-s}\tilde U_s)](x)dr
            \nonumber    \\ & = :S_{21}+S_{22}.\label{def:S21S22}
           \end{align}
           We first estimate $S_{21}$. Observe that, similar to \eqref{G:Lip}, we have for $u_1,u_2\in C_x^{\frac{d}{2}-2+\varepsilon}$ 
           \begin{align}
               \label{PtG:Lip}     \|\cP_t\mathbf{G}(u_1)-\cP_t\mathbf{G}(u_2)\|_{C_x^{\frac{d}{2}-2-\varepsilon}(\mT^d)}\lesssim \|u_1-u_2\|_{C_x^{\frac{d}{2}-2+\varepsilon}(\mT^d)}.
           \end{align}
            Indeed, again by the mean value theorem, \eqref{est:product-est}  and \eqref{prop:Besov} we derive
 \begin{align*}
    \|\cP_t\mathbf{G}(u_1)-\cP_t\mathbf{G}(u_2)\|_{C_x^{\frac{d}{2}-2-\varepsilon}(\mT^d)}&= \left\|\int_0^1\nabla   \cP_t\mathbf{G}(\theta u_1+(1-\theta)u_2)d\theta \cdot (u_1-u_2)\right\|_{C_x^{\frac{d}{2}-2-\varepsilon}} \\&\leq  \sup_{\theta\in[0,1]}\|\nabla   \cP_t\mathbf{G}(\theta u_1+(1-\theta)u_2) \cdot (u_1-u_2)\|_{C_x^{\frac{d}{2}-2-\varepsilon}}.
 \\& \approx \sup_{\theta\in[0,1]}\|   \cP_t\mathbf{G}(\theta u_1+(1-\theta)u_2)\|_{C_x^{3-\frac{d}{2}-\frac{\varepsilon}{2}}} \| u_1-u_2\|_{C_x^{\frac{d}{2}-2+\varepsilon}}
 \\ & \lesssim 
 \|\bG\|_{C^{3-\frac{d}{2}}}
 \| u_1-u_2\|_{C_x^{\frac{d}{2}-2+\varepsilon}}. 
  \end{align*}

 We can write, by applying above with $u_1=P_{r-s}\bar U_s, u_2=P_{k_n(r)-s}\bar U_s$
 \begin{align}
     \label{est:S21}
     &(\mE|S_{21}|^p)^\frac{1}{p} \nonumber\\  & \lesssim 2^{-j\varepsilon}\left(\mE \left\|\int_{\tilde k_n(s)}^tP_{t^*-r}^N[(\cP_{Q(r-s)} \mathbf{G})(P_{r-s}\tilde U_s)-(\cP_{Q(r-s)} \mathbf{G})(P_{k_n(r)-s}\tilde U_s)]dr \right\|^p_{C_x^\varepsilon}\right)^\frac{1}{p}
     \nonumber\\ & \lesssim 2^{-j\varepsilon} \int_{\tilde k_n(s)}^t(t^*-r)^{-\frac{2-d/2+\varepsilon}{4}}\left(\mE \left\|(\cP_{Q(r-s)} \mathbf{G})(P_{r-s}\tilde U_s)-(\cP_{Q(r-s)} \mathbf{G})(P_{k_n(r)-s}\tilde U_s)\right\|_{C_x^{\frac{d}{2}-2}}^p\right)^{\frac{1}{p}}dr
       \nonumber\\ & \lesssim 2^{-j\varepsilon}\int_{\tilde k_n(s)}^t(t^*-r)^{-\frac{(2-d/2+\varepsilon)}{4}}\nonumber\\&\quad\qquad\qquad \times \left(\mE\sup_{r\in[\tilde k_n(s),t]}
       \sup_{\theta\in[0,1]}\left \| \left(\theta(P_{r-s}\tilde U_s)+(1-\theta) (P_{k_n(r)-s}\tilde U_s) \right)\right\|^{2p}_{C_x^{2-\frac{d}{2}-\frac{\varepsilon}{2}}}\right)^{\frac{1}{2p}}
       \nonumber
       \\&
       \quad\qquad\qquad \times \left(\mE\left \| P_{r-s}\tilde U_s-(P_{k_n(r)-s}\tilde U_s)\right\|^{2p}_{C_x^{\frac{d}{2}-2+\varepsilon}}\right)^\frac{1}{2p}dr.
 \end{align}
 Following from 
 \eqref{est:tiled-U-space} and \eqref{est:semi-Hol} together with \eqref{est:U-P}, we obtain that
 \begin{align*}
 \left(\mE\sup_{r\in[\tilde k_n(s),t]}
 \sup_{\theta\in[0,1]} \left\| \left(\theta(P_{r-s}\tilde U_s)+(1-\theta) (P_{k_n(r)-s}\tilde U_s) \right)\right\|^{2p}_{C_x^{2-\frac{d}{2}-\frac{\varepsilon}{2}}}\right)^{\frac{1}{2p}}\lesssim1.
 \end{align*}
Moreover, from \eqref{est:tiledU-P} by taking $\lambda=2-4\varepsilon$ 
 we know that 
 \begin{align*}
\left(\mE\left \| P_{r-s}\tilde U_s-(P_{k_n(r)-s}\tilde U_s)\right\|^{2p}_{C_x^{\frac{d}{2}-2+\varepsilon}}\right)^\frac{1}{2p} & \lesssim |(r-s)-(k_n(r)-s)|^{-\frac{6-d}{4}-\varepsilon}{(k_n(r)-s})^{-\frac{2-4\varepsilon}{4}}
\\ & \lesssim n^{-\frac{6-d}{4}+\varepsilon}(k_n(r)-s)^{-\frac{1}{2}+\varepsilon}.
 \end{align*}
 Combining this with \eqref{est:S21} and using that
$k_n(r)-s\geq \frac{r-s}{2}$ for $r\in[\tilde k_n(s),t]$, we obtain
 \begin{align}\label{est:S21-fur}
  (\mE|S_{21}|^p)^\frac{1}{p}  & \lesssim  2^{-j\varepsilon}\int_{\tilde k_n(s)}^t(t^*-r)^{-\frac{(2-d/2+2\varepsilon)}{4}}n^{-\frac{6-d}{4}+\varepsilon}(k_n(r)-s)^{-\frac{1}{2}+\varepsilon}dr \nonumber\\  & \lesssim 2^{-j\varepsilon} n^{-\frac{6-d}{4}+\varepsilon} (t^*-t)^{-\frac{(2-d/2+2\varepsilon)}{4}}\int_{\tilde k_n(s)}^t(r-s)^{-\frac{1}{2}+\varepsilon}dr
  \nonumber\\  & \lesssim 2^{-j\varepsilon} n^{-\frac{6-d}{4}+\varepsilon} (t^*-t)^{-\frac{(2-d/2+2\varepsilon)}{4}}(t-s)^{\frac{1}{2}+\varepsilon}.
 \end{align}
 For $S_{22}$, observe from \eqref{def:Q}
 \begin{align*}
&Q(r-s)-Q(k_n(r)-s)
\\&=\mE\left(\int_s^r\int_{\mT^2}p_{r-v}(x-y)\xi(dv,dy)\right)^2-\mE\left(\int_s^{k_n(r)}\int_{\mT^2}p_{k_n(r)-v}(x-y)\xi(dv,dy)\right)^2
\\&=\int_s^r\int_{\mT^2}|p_{r-v}(x-y)|^2dydv-\int_s^{k_n(r)}\int_{\mT^2}|p_{k_n(r)-v}(x-y)|^2dydv
\\&=\int_{k_n(r)-s}^{r-s}\int_{\mT^2}|p_{v}(x-y)|^2dydv
\\&\lesssim\int_{k_n(r)-s}^{r-s}\|p_{v}(x-y)\|_\infty\int_{\mT^2}|p_{v}(x-y)|dydv
\\&\lesssim \int_{k_n(r)-s}^{r-s}v^{-\frac{d}{4}}dv\lesssim (r-k_n(r))^{\frac{6-d}{4}-\varepsilon'}|k_n(r)-s|^{-\frac{1}{2}+\varepsilon'}
 \end{align*}
 for each $\varepsilon'\in(0,1)$. 
By the property of the heat semigroup $$\|(\cP_t-\cP_s)\mathbf{G}(u)\|_{C_x^{-\frac{1}{2}-\varepsilon}}\lesssim|t-s|\|\mathbf{G}(u)\|_{C_x^{\frac{1}{2}-\varepsilon}}\lesssim|t-s|\|u\|_{C_x^{\frac{1}{2}-\varepsilon}},$$
by taking $\varepsilon'=2\varepsilon$,
 we thus derive
 \begin{align*}
   \|(\cP_{Q(r-s)} \mathbf{G})-(\cP_{Q(k_n(r)-s)} \mathbf{G})(u)\|_{C_x^{-\frac{1}{2}-\frac{\varepsilon}{2}}}&\lesssim |Q(r-s)-Q(k_n(r)-s)|\|u\|_{C_x^{\frac{1}{2}-\varepsilon}}
   \\&\lesssim (r-k_n(r))^{\frac{6-d}{4}-\varepsilon}|k_n(r)-s|^{-\frac{1}{2}+\varepsilon}\|u\|_{C_x^{\frac{1}{2}-\varepsilon}}\\&\lesssim n^{-\frac{6-d}{4}+\varepsilon}|k_n(r)-s|^{-\frac{1}{2}-\varepsilon}\|u\|_{C_x^{\frac{1}{2}-\varepsilon}}
   \\&\lesssim n^{-\frac{6-d}{4}+\varepsilon}|r-s|^{-\frac{1}{2}+\varepsilon}\|u\|_{C_x^{2-\frac{d}{2}-\varepsilon}}
 \end{align*}
 for $r\in[\tilde k_n(s),t]$, where $k_n(r)-s\geq \frac{r-s}{2}$.
 Applying the above estimate to \eqref{def:S21S22} with $u=P_{k_n(r)-s}\tilde U_s$, we derive
 \begin{align}\label{est:S22}
  (\mE|S_{22}|^p)^\frac{1}{p}  & \lesssim    2^{-j\varepsilon}\int_{\tilde k_n(s)}^t\|P_{t^*-r}^N[\left((\cP_{Q(r-s)} \mathbf{G})-(\cP_{Q(k_n(r)-s)} \mathbf{G})\right)(P_{k_n(r)-s}\tilde U_s)]\|_{C_x^\varepsilon}dr \nonumber\\  & \lesssim 2^{-j\varepsilon}\int_{\tilde k_n(s)}^t|t^*-r|^{-\frac{1+2\varepsilon}{8}}\|[\left((\cP_{Q(r-s)} \mathbf{G})-(\cP_{Q(k_n(r)-s)} \mathbf{G})\right)(P_{k_n(r)-s}\tilde U_s)]\|_{C_x^{\frac{1}{2}-\varepsilon}}dr
   \nonumber\\  & \lesssim 2^{-j\varepsilon}\int_{\tilde k_n(s)}^t|t^*-r|^{-\frac{1+2\varepsilon}{8}}n^{-\frac{6-d}{4}+\varepsilon}|r-s|^{-\frac{1}{2}+\varepsilon}\|P_{k_n(r)-s}\tilde U_s\|_{C_x^{\frac{1}{2}-\varepsilon}}dr
    \nonumber\\  & \lesssim 2^{-j\varepsilon}|t^*-r|^{-\frac{1+2\varepsilon}{8}}(t-s)^{\frac{1}{2}+\varepsilon}n^{-\frac{6-d}{4}+\varepsilon}.
 \end{align}
 In the end,  combining \eqref{def:S21S22}, \eqref{est:S21-fur}, \eqref{est:S22} and \eqref{est:Ast-Lp2} and \eqref{est:Ast-2-12} we conclude that
 \begin{align}\label{est:Ast-2}
  \|A_{s,t}^{(j)}\|_{L^p(\Omega)} & \lesssim  2^{-j\varepsilon}|t^*-r|^{-\frac{2-d/2+\varepsilon}{4}}(t-s)^{\frac{1}{2}+\varepsilon}n^{-\frac{6-d}{4}+\varepsilon}.
 \end{align}
       \end{enumerate}

  Combining the estimates of these two cases, namely  \eqref{est:Ast-Lp1} and \eqref{est:Ast-2}, we obtain
   \begin{align*}
      \|A_{s,t}^{(j)}\|_{L^p(\Omega)} & \lesssim  2^{-j\varepsilon}|t^*-r|^{-\frac{2-d/2+\varepsilon}{4}}(t-s)^{\frac{1}{2}+\varepsilon}n^{-1+\varepsilon}.
   \end{align*}
   Hence, the first condition of \Cref{lem:SSL} is verified by taking $C_1:=n^{-\frac{6-d}{4}+\varepsilon}$ and $\delta_1={-\frac{1}{2}+\varepsilon}$. \\
   The second condition of \Cref{lem:SSL} is immediate from $\mE_s\delta A_{s,u,t}=0$. 
   Indeed, for any $(s,t)\in[0,T]_\leq^*$, $s\leq u\leq t$, by \eqref{def:A}, we have
   \begin{align*}
\delta A_{s,u,t}^{(j)} & = A_{s,t}^{(j)}-A_{s,u}^{(j)}-A_{u,t}^{(j)}
     \\ & = (\mE_s-\mE_u)\left(\int_u^t \Delta_j P_{t^\ast-r}^N\left( \mathbf{G}(\tilde U_r)-\mathbf{G}(\tilde U_{k_n(r)})\right) d r\right).
   \end{align*}
Applying the tower property of the conditional expectation, we conclude that $\mE_s\delta A_{s,u,t}^{(j)}=0$. Therefore, \eqref{con:SSL-2} holds for $A_{s,t}^{(j)}$ defined in \eqref{def:A}.  \\

Up to this point, we are ready to apply \Cref{lem:SSL}. Let 
\begin{align*}
    \cA_t^{(j)}:=\int_0^t \Delta_j P_{t^\ast-r}^N\left( \mathbf{G}(\tilde U_r)-\mathbf{G}(\tilde U_{k_n(r)})\right) d r=\Delta_j \hat{I}_{2,1}(t,t^\ast).
\end{align*}
Observe that 
\begin{align*}
    \mE_s(\cA^{(j)}_{s,t}-A^{(j)}_{s,t})=0,\qquad |\cA_{s,t}^{(j)}-A_{s,t}^{(j)}|\leq |t-s|\sup_{r\in[s,t]}\| P_{t^\ast-r}^N\mathbf{G}(\tilde U_r)\|_{\infty}
\end{align*}which shows \eqref{est:step2-inter} by \eqref{est:SSL-3}. This finishes the proof of \eqref{est:step2-final}.

\begin{itemize}
           \item[\textbf{Step 3.}] \textbf{Uniform bounds via Kolmogorov continuity theorem:}
       \end{itemize}
       Following from a version of Kolmogorov's continuity theorem (see also \cite[Proof of Corollary 4.6]{DKP}), which says that for a continuous process starting from $0$ with values in a Banach space $V$ and a {strongly continuous} semigroup $(S_t)_{t\geq0}$ of bounded linear operators on $V$, if for some $p>0,\alpha>0$ one has 
       \begin{align}\label{con:Kol}
           \mE \left\|X_t-S_{t-s}X_s\right\|^p\leq C_1|t-s|^{1+\alpha},
       \end{align}
       then there exists $C_2=C(T,p,d,\alpha)$ so that
       \begin{align}\label{est:Kol}
           \mE\sup_{t\in[0,T]}\|X_t\|^p\leq C_1C_2.
       \end{align}
       Now we choose 
          \begin{align*}
           X_t=\int_0^t P_{t-r}^N\left( \mathbf{G}(\tilde U_r)-\mathbf{G}(\tilde U_{k_n(r)})\right) d r.
       \end{align*}
       Then from \eqref{est:step2-final} with $t=t^\ast$, we know that \eqref{con:Kol} holds with $S=P^N$, $V=L^2(\mT^d)$ and $\alpha=2\varepsilon$, $p\geq4$, $C_1=(Cn^{-\frac{6-d}{4}+\varepsilon})^p$.  Hence, \eqref{est:Kol} yields \begin{align}
          \label{est:step3}
          \left( \mE\sup_{t\in[0,T]}\|\hat I_{21}(t,t) \|_{L^2(\mT^d)}^p\right)^\frac{1}{p}\leq Cn^{-\frac{6-d}{4}+\varepsilon}.
       \end{align}

       \medskip 
       
        \begin{itemize}
           \item[\textbf{Step 4.}] \textbf{Buckling via Gr\"onwall Lemma:}   
       \end{itemize}
             Putting \eqref{est:step3}, \eqref{est:I-21-r} and \eqref{def:I2-1-2} together, we can see that 
        \begin{align}
            \label{est:I21-sup}\left( \mE\sup_{t\in[0,T]}\|I_{21}(t) \|_{L^2(\mT^d)}^p\right)^\frac{1}{p}\leq C n^{-\frac{6-d}{4}+\varepsilon}.  
        \end{align}
        With \eqref{est:I21-sup} at hand, observing from \eqref{def:I2-1-2},  we are ready to apply \Cref{lem:Gron}
        by taking $X=\hat u^N, Y=\hat u^{N,n}$ and $Z=I_{21}, \cS(s,t)=P^N_{t-s}, \tau(s)=k_n(s)$ for \eqref{con:Gron}.
      In the end,  the desired estimate \eqref{est:I2} is obtained via \eqref{est:I21-sup} and \eqref{est:Gron}. 
        The proof is complete.
    \end{proof}

    \subsection{Estimate of $I_3$}
    \begin{lemma}
        Suppose $u_0\in  C_x^{2-\frac{d}{2}}$ and $\|\bG\|_\infty, \|\partial \bG\|_\infty<\infty$. For $ u^{N}$ from \eqref{def:sol-h-Galerkin} and $\hat u^{N}$ from \eqref{def:sol-h-G} we have
        for $p\geq1$ and for sufficiently small $\varepsilon>0$
        \begin{align}\label{est:I3}
             \left( \mE\sup_{t\in[0,T]}\|  u^{N}(t)-\hat u^{N}(t)\|_{L^2(\mT^d)}^p
             \right)^\frac{1}{p}
             \lesssim
             N^{\frac{d}{2}-2+\varepsilon}.
        \end{align}
    \end{lemma}

    \begin{proof}
    From \eqref{def:sol-h-G} and \eqref{def:sol-h-Galerkin} we have 
    \begin{align*}
        u^{N}(t)-\hat u^{N}(t)=&\Big(P_t^Nu_0+\int_0^tP_{t-s}^N\mathbf{G}( u^N(s)) d s +U^N(t)\Big)-\Big(P_t^Nu_0+\int_0^tP_{t-s}^N\mathbf{G}( \hat u^N(s)) d s +U(t)\Big)
        \\=&\int_0^tP_{t-s}^N\mathbf{G}( u^N(s)) -P_{t-s}^N\mathbf{G}( \hat u^N(s)) d s +U(t)^N-U(t).
    \end{align*}
    The claim follows by the same argument as in the proof of \Cref{lem:I1-est}.
    We omit the details.
    \end{proof}
    
    \subsection{Estimate of $I_4$}
    \begin{lemma}\label{lem:I4-no-taming}
        Suppose $u_0\in  C_x^{2-\frac{d}{2}}$ and $\|\bG\|_\infty, \|\partial \bG\|_\infty<\infty$. For $ u^{N}$ from \eqref{def:sol-h-Galerkin} and $v$ from \eqref{def:sol-h} we have
        for $\varepsilon\in \left(0,\frac{1}{2}\right)$ and $p\geq1$ \begin{align}\label{est:I4}
             \left( \mE\sup_{t\in[0,T]}\|  u^{N}(t)-v(t)\|_{L^2(\mT^d)}^p\right)^\frac{1}{p}
             \lesssim
             N^{\frac{d}{2}-2+\epsilon}
     \| u_0\|_{ C_x^{2-\frac{d}{2}}}.
        \end{align}
    \end{lemma}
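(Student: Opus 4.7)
The plan is to decompose the difference
\begin{align*}
u^N(t) - v(t) &= (P_t^N - P_t)u_0 + (U^N(t) - U(t)) \\
&\quad + \int_0^t P_{t-s}^N\bigl[\bG(u^N(s)) - \bG(v(s))\bigr]\,ds - \int_0^t (\id-\Pi_N)P_{t-s}\bG(v(s))\,ds,
\end{align*}
using that $P_{t-s}^N = \Pi_N P_{t-s}$ commutes with $\Pi_N$. This isolates the Lipschitz/Gronwall contribution (the third summand) from three genuine source terms --- initial-data mismatch, noise mismatch, and projector--semigroup mismatch --- each of which I will bound individually at rate $N^{-1+\varepsilon}$.

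For the initial-data term, since $u_0\in C^1\hookrightarrow H^1(\mT^2)$ and $\Pi_N$ commutes with $P_t$,
\begin{align*}
\|(P_t^N - P_t)u_0\|_{L^2(\mT^2)} = \|P_t(\id - \Pi_N)u_0\|_{L^2(\mT^2)} \leq \|(\id-\Pi_N)u_0\|_{L^2(\mT^2)} \lesssim N^{-1}\|u_0\|_{C^1}.
\end{align*}
For the noise mismatch I reuse the estimate \eqref{est:U-U^N} obtained in the proof of \Cref{lem:I1-est}, which gives $\bigl(\mE\|U - U^N\|_{C_T^{\varepsilon''}C_x^{\varepsilon''}}^p\bigr)^{1/p}\lesssim N^{-1+\varepsilon}$; the embedding $C_x^{\varepsilon''}\hookrightarrow L^2(\mT^2)$ transfers this to the norm we need. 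For the projector--semigroup term, the Fourier-side bound $\|(\id - \Pi_N)f\|_{L^2}\leq N^{-1}\|f\|_{H^1}$ combined with the bilaplacian smoothing $\|P_{t-s} g\|_{H^1}\lesssim (t-s)^{-1/4}\|g\|_{L^2}$ (a direct consequence of \eqref{def:Bilaplace}, of the same flavour as \Cref{lem:auxi-semigroup}) and the boundedness $\|\bG(v(s))\|_{L^\infty(\mT^2)}\leq \|\bG\|_\infty$ give
\begin{align*}
\Bigl\|\int_0^t (\id-\Pi_N)P_{t-s}\bG(v(s))\,ds\Bigr\|_{L^2(\mT^2)} \leq N^{-1}\int_0^t (t-s)^{-1/4}\|\bG\|_\infty\,ds \lesssim N^{-1}.
\end{align*}

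To close the argument I would invoke the Gronwall-type \Cref{lem:Gron} with $X=u^N$, $Y=v$, $F=\bG$ (Lipschitz with constant $\|\partial\bG\|_\infty$), $\cS(s,t)=P_{t-s}^N$ (a contraction on $L^2(\mT^2)$ uniformly in $N$), $\tau(s)=s$, and $Z$ equal to the sum of the three source terms above. The bound \eqref{est:Gron} then yields \eqref{est:I4}. The only (mild) obstacle is verifying that the integrability exponent $1/4$ arising from the semigroup smoothing is strictly below $1$, so that no logarithmic loss occurs and the deterministic $N^{-1}$ rate persists uniformly in $t$. Unlike the analysis of $I_2$, no stochastic sewing is required here: the temporal discretisation has already been absorbed into the bound for $I_2$, and since $\bG$ is globally Lipschitz and bounded, the spatial Galerkin error propagates linearly through the nonlinearity and is closed purely by Gronwall together with the three elementary source-term estimates above.
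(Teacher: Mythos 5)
Your proposal is correct and follows the same overall architecture as the paper: isolate three source terms (initial-data mismatch, noise mismatch, projector--semigroup mismatch), bound each at rate $N^{-1+\varepsilon}$, and close with the Gr\"onwall \Cref{lem:Gron}; the noise term is handled identically via \eqref{est:U-U^N}. The one genuine difference is the treatment of the projector--semigroup term. The paper writes this term as $\int_0^t (P^N_{t-s}-P_{t-s})\bG(u^N(s))\,ds$, keeps the Lipschitz correction under the full semigroup $P_{t-s}$, and bounds the mismatch by the spectral projection error in $L^\infty$ using the $C^{1-}$ spatial regularity of $u^N$ supplied by \Cref{lem:SPDE}. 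You instead put the Lipschitz correction under $P^N_{t-s}$ and land the mismatch on $\bG(v(s))$, bounding it by $\|(\id-\Pi_N)f\|_{L^2}\lesssim N^{-1}\|f\|_{H^1}$ together with the bilaplacian smoothing $\|P_{t-s}g\|_{H^1}\lesssim (t-s)^{-1/4}\|g\|_{L^2}$ and only $\|\bG\|_\infty$; this is a legitimate and somewhat more elementary route, since it needs no regularity of the solution and no chain-rule estimate for $\bG$, at the price of the (integrable, harmless) singularity $(t-s)^{-1/4}$. Two minor points: the paper's $C^1$ means $\cB^1_{\infty,\infty}$, which embeds into $H^{1-\epsilon}$ rather than $H^1$, so your initial-data bound should read $N^{-1+\epsilon}\|u_0\|_{C^1}$ --- exactly the rate claimed, so nothing is lost; and when invoking \Cref{lem:Gron} you should note, as you implicitly do, that $\sup_N\|P^N_{t-s}\|_{L^2\to L^2}\leq 1$ so the operator family is uniformly bounded.
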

    \begin{proof}   
    For each $t \in [0,T]$ we have
    \begin{align*}
        u^{N}(t)-v(t) &=
        \left( P_t-P_t^N \right)  u_0
        \ + \ U(t)-U^N(t) 
        \ + \ 
        \int _0^t  P_{t-s}^N\bG(u^N(s)) 
        -
        P_{t-s} \bG(u(s))
        ds 
        \\
        & =
        \left( P_t-P_t^N \right)  u_0
        \ + \ U(t)-U^N(t) 
        \ + \ 
        \int _0^t  
        \left( P_{t-s}^N
        -
        P_{t-s}\right) \bG(u^N(s)) 
        ds 
         \\ & \quad +  
        \int _0^t  
        P_{t-s} 
        \left(
        \bG(u^N(s)) 
        -
         \bG(u(s))
         \right)
        ds.
    \end{align*}
    By triangle inequality and applying \Cref{lem:Gron}, we derive
        \begin{align*}
      &  \left(\mE\sup_{t\in[0,T]} \left\|  u^{N}(t)-v(t) \right\|_{L^2(\mT^d)}^p\right)^\frac{1}{p}  
       \\ & \lesssim 
       \left( \mE\sup_{t\in[0,T]} \left\|  \left( P_t-P_t^N \right)  u_0 \right\|_{L^2(\mT^d)}^p\right)^\frac{1}{p} 
        +
        \left( \mE\sup_{t\in[0,T]}\left \|\int _0^t  
        \left(
        P_{t-s}^N 
        -
        P_{t-s} 
        \right)
        \bG(u^N(s))
        ds \right\|_{L^2(\mT^d)}^p\right)^\frac{1}{p} 
        \\ & 
        \quad + 
        \left( \mE\sup_{t\in[0,T]} \left\|  U(t)-U^N(t) \right\|_{L^2(\mT^d)}^p\right)^\frac{1}{p}
        =:
        I_{41}+I_{42}+I_{43}.
        \end{align*}
      It simply follows from \eqref{est:U-U^N} that 
      \begin{align}
          \label{est:I43}I_{43}\lesssim N^{\frac{d}{2}-2+\varepsilon}.
      \end{align}      
     For $I_{41}$, we use $\Pi_N$, the orthogonal projection from $L^2(\mT^d,\mC)$ to its subspace $Span\left((e_k)_{|k| \leq N} \right)$, to obtain
     \begin{align}\label{est:I41}
         I_{41} & \lesssim  {\sup_{t\in[0,T]} } \|  (P_t-P_t^N)u_0\|_{L^\infty(\mT^d)}\lesssim \| u_0-\Pi_Nu_0\|_{L^\infty(\mT^d)}\lesssim N^{\frac{d}{2}-2+\varepsilon} \|u_0\|_{C_x^1},
     \end{align}
        for $\varepsilon>0$ being sufficiently small.
     Let us now move to $I_{42}$. By the similar idea of getting \eqref{est:I41} and the result from \Cref{lem:SPDE} we have
    \begin{align}\label{est:I42}
         I_{42}  & =  
         \left( \mE\sup_{t\in[0,T]}\left \|\int _0^t  
        \left(
        P_{t-s}^N 
        -
        P_{t-s} 
        \right)
        \bG(u^N(s))
        ds \right\|_{L^2(\mT^d)}^p\right)^\frac{1}{p} 
       \nonumber \\
        & \leq
        \left( \mE\sup_{t\in[0,T]}
        \left (
        \int _0^t  
        \left \|
       \bG(u^N(s)) -\Pi_N\bG(u^N(s)) 
        \right\|_{L^\infty(\mT^d)}
        %
        ds \right)^p\right)^\frac{1}{p} 
        \nonumber   \\
        & \leq C N^{\frac{d}{2}-2+\varepsilon}\sup_{t\in[0,T]}\|u^N(t)\|_{C_x^1}\lesssim N^{\frac{d}{2}-2+\varepsilon}.
     \end{align}
    
     Collecting \eqref{est:I43}, \eqref{est:I41} and \eqref{est:I42} together shows \eqref{est:I4}.        
    \end{proof}
    \subsection{Proof of the main result}
    With estimates \eqref{est:I1},  \eqref{est:I2},  \eqref{est:I3},  \eqref{est:I4} 
    at hand, we are ready to give:
\begin{proof}[Proof of \Cref{thm:main}]
Putting the estimates \eqref{est:I1},  \eqref{est:I2},  \eqref{est:I3},  \eqref{est:I4}  together into \eqref{est:uNn-v} yields \eqref{est:main-con-taming}.
 
  The proof is finished.
\end{proof}

\section*{Acknowledgement}

Several inspiring and fruitful discussions with A. Djurdjevac, M. Gerencs\'er, H. Kremp are acknowledged. We particularly thank H. Kremp for suggesting the possible order $1$ spatial rate via assuming $C^1_x$-regular initail datum.

\end{document}